\newtheorem{theorem}{Theorem}
\newtheorem{lemma}{Lemma}
\newtheorem*{rodin1}{Theorem R}
\newtheorem*{rodin2}{Theorem R2}
\newtheorem*{karagula}{Theorem K}
\newtheorem*{schipp}{Theorem Sch1}
\newtheorem*{ggk1}{Theorem GGK}
\newtheorem*{SCH2}{Theorem Sch2}
\begin{document}
\author{Ushangi Goginava}
\title[ Almost Everywhere Summability]{ Almost Everywhere Strong Summability
of Fej\'er means of rectangular partial sums of two-dimensional
Walsh-Fourier Series}
\address{U. Goginava, Department of Mathematics, Faculty of Exact and
Natural Sciences, Ivane Javakhishvili Tbilisi State University,
Chavcha\-vadze str. 1, Tbilisi 0128, Georgia}
\email{zazagoginava@gmail.com}
\date{}
\maketitle

\begin{abstract}
It is proved a BMO-estimation for rectangular partial sums of
two-dimensional Walsh-Fourier series from which it is derived an almost
everywhere exponential summability of rectangular partial sums of double
Walsh-Fourier series.
\end{abstract}

\footnotetext{%
2010 Mathematics Subject Classification 42C10 .
\par
Key words and phrases: two-dimensional Walsh system, strong summability, a.
e. summability
\par
The research was supported by Shota Rustaveli National Science Foundation
grant no.DI/9/5-100/13 (Function spaces, weighted inequalities for integral
operators and problems of summability of Fourier series)}

\section{\protect\bigskip\ Introduction}

We shall denote the set of all non-negative integers by $\mathbb{N}$ , the
set of all integers by$\,\,\mathbb{Z}$ and the set of dyadic rational
numbers in the unit interval $\mathbb{I}:=[0,1)$ by $\mathbb{Q}$. In
particular, each element of $\mathbb{Q}$ has the form $\frac{p}{2^{n}}$ for
some $p,n\in \mathbb{N},\,\,\,0\leq p\leq 2^{n}$.

Let $r_{0}\left( x\right) $ be the function defined by 
\begin{equation*}
r_{0}\left( x\right) =\left\{ 
\begin{array}{c}
1,\mbox{ if }x\in [0,1/2) \\ 
-1,\mbox{ if }x\in [1/2,1)%
\end{array}
\right. ,\,\,\,\,\,\,\,\,\,\,\,r_{0}\left( x+1\right) =r_{0}\left( x\right) .
\end{equation*}
The Rademacher system is defined by 
\begin{equation*}
r_{n}\left( x\right) =r_{0}\left( 2^{n}x\right) ,\,\,\,\,\,\,\,n\geq 1.
\end{equation*}

Let $w_{0},w_{1},...\,\,$represent the Walsh functions, i.e. $w_{0}\left(
x\right) =1\,\,$and $\,$if$\,\,k=2^{n_{1}}+\cdots +2^{n_{s}}\,$is a positive
integer with $n_{1}>n_{2}>\cdots >n_{s}\,\,$then 
\begin{equation*}
w_{k}\left( x\right) =r_{n_{1}}\left( x\right) \cdots r_{n_{s}}\left(
x\right) .
\end{equation*}

The Walsh-Dirichlet kernel is defined by 
\begin{equation*}
D_{0}\left( x\right) =0,D_{n}\left( x\right)
=\sum\limits_{k=0}^{n-1}w_{k}\left( x\right) \text{ }n\geq 1.
\end{equation*}

Given $x\in \mathbb{I}$, the expansion

\begin{equation}
x=\sum\limits_{k=0}^{\infty }x_{k}2^{-(k+1)},  \label{rep}
\end{equation}%
where each $x_{k}=0$ or $1$, will be called a dyadic expansion of $x.$ If $%
x\in \mathbb{I}\backslash \mathbb{Q}\mathbf{\,}$, then (\ref{rep}) is
uniquely determined. For the dyadic expansion $x\in \mathbb{Q}$\textbf{\ }we
choose the one for which $\lim\limits_{k\rightarrow \infty }x_{k}=0$.

The dyadic addition of $x,y\in \mathbb{I}$ in terms of the dyadic expansion
of $x$ and $y$ is defined by (see \cite{GES} or \cite{SWS}) 
\begin{equation*}
x\dotplus y=\sum\limits_{k=0}^{\infty }\left\vert x_{k}-y_{k}\right\vert
2^{-(k+1)}.
\end{equation*}

Denote $I_{N}:=[0,2^{-N}),$ $I_{N}\left( x\right) :=x\dotplus I_{N}$.

We consider the double system $\left\{ w_{n}(x)\times w_{m}(y):\,n,m\in 
\mathbb{N}\right\} $ on the unit square $\mathbb{I}^{2}=\left[ 0,1\right)
\times \left[ 0,1\right) .$The notiation $a\lesssim b$ in the whole paper
stands for $a\leq c\cdot b$, where $c$ is an absolute constant.

The norm (or quasinorm) of the space $L_{p}\left( \mathbb{I}^{2}\right) $ is
defined by 
\begin{equation*}
\left\Vert f\right\Vert _{p}:=\left( \int\limits_{\mathbb{I}^{2}}\left\vert
f\right\vert ^{p}\right) ^{1/p}\,\,\,\,\left( 0<p<+\infty \right) .
\end{equation*}

If $f\in L_{1}\left( \mathbb{I}^{2}\right) ,$ then 
\begin{equation*}
\hat{f}\left( n,m\right) =\int\limits_{\mathbb{I}^{2}}f\left(
x_{1},x_{2}\right) w_{n}(x_{1})w_{m}(x_{2})dx_{1}dx_{2}
\end{equation*}%
is the $\left( n,m\right) $-th Fourier coefficient of $f.$

The rectangular partial sums of double Fourier series with respect to the
Walsh system are defined by 
\begin{equation*}
S_{M,N}\left( x_{1},x_{2};f\right)
=\sum\limits_{m=0}^{M-1}\sum\limits_{n=0}^{N-1}\hat{f}\left( m,n\right)
w_{m}(x_{1})w_{n}(x_{2}).
\end{equation*}

Denote 
\begin{equation*}
S_{n}^{\left( 1\right) }\left( x_{1},x_{2};f\right) :=\sum\limits_{l=0}^{n-1}%
\widehat{f}\left( l,x_{2}\right) w_{l}\left( x_{1}\right) ,
\end{equation*}%
\begin{equation*}
S_{m}^{\left( 2\right) }\left( x_{1},x_{2};f\right) :=\sum\limits_{r=0}^{m-1}%
\widehat{f}\left( x_{1},r\right) w_{r}\left( x_{2}\right) ,
\end{equation*}%
where 
\begin{equation*}
\widehat{f}\left( l,x_{2}\right) =\int\limits_{\mathbb{I}}f\left(
x_{1},x_{2}\right) w_{l}\left( x_{1}\right) dx_{1},\widehat{f}\left(
x_{1},r\right) =\int\limits_{\mathbb{I}}f\left( x_{1},x_{2}\right)
w_{r}\left( x_{2}\right) dx_{2}.
\end{equation*}

Recall the definition of $BMO\left[ \mathbb{I}^{2}\right] $ space. Let $f\in
L_{1}\left( \mathbb{I}^{2}\right) $ . We say that $f$ has bounded mean
oscilation $\left( f\in BMO\left[ \mathbb{I}^{2}\right] \right) $ if%
\begin{equation*}
\left\Vert f\right\Vert _{BMO}:=\sup\limits_{Q}\left( \frac{1}{\left\vert
Q\right\vert }\int\limits_{Q}\left\vert f-f_{Q}\right\vert ^{2}\right)
^{1/2}<\infty ,
\end{equation*}%
where%
\begin{equation*}
f_{Q}:=\frac{1}{\left\vert Q\right\vert }\int\limits_{Q}f
\end{equation*}%
and the supremum is taken over all dyadic squares $Q\subset \mathbb{I}^{2}$.

Let $\xi :=\left\{ \xi _{n_{1}n_{2}}:n_{1},n_{2}=0,1,2,...\right\} $ be an
arbitrary sequence of numbers. Taking%
\begin{equation*}
\delta _{k}^{n}:=\left[ \frac{k}{2^{n}},\frac{k+1}{2^{n}}\right) ,
\end{equation*}%
we define%
\begin{equation*}
BMO\left[ \xi \right] :=\sup\limits_{0\leq n_{1},n_{2}<\infty }\left\Vert
\sum\limits_{k_{1}=0}^{2^{n_{1}}-1}\sum\limits_{k_{2}=0}^{2^{n_{2}}-1}\xi
_{k_{1}k_{2}}\mathbb{I}_{\delta _{k_{1}}^{n_{1}}}\left( t_{1}\right) \mathbb{%
I}_{\delta _{k_{2}}^{n_{2}}}\left( t_{2}\right) \right\Vert _{BMO},
\end{equation*}%
where $\mathbb{I}_{E}$ is the characteristic function of $E\subset \mathbb{I}%
^{2}$.

We denote by $L\left( \log L\right) ^{\alpha }\left( \mathbb{I}^{2}\right) $
the class of measurable functions $f$, with 
\begin{equation*}
\int\limits_{\mathbb{I}^{2}}|f|\left( \log ^{+}|f|\right) ^{\alpha }<\infty ,
\end{equation*}%
where $\log ^{+}u:=\mathbb{I}_{(1,\infty )}\log u.$

Denote by $S_{n}^{T}(x,f)$ the partial sums of the trigonometric Fourier
series of $f$ and let 
\begin{equation*}
\sigma _{n}^{T}(x,f)=\frac{1}{n+1}\sum_{k=0}^{n}S_{k}^{T}(x,f)
\end{equation*}%
be the $(C,1)$ means. Fejér \cite{Fe} proved that $\sigma _{n}^{T}(f)$
converges to $f$ uniformly for any $2\pi $-periodic continuous function.
Lebesgue in \cite{Le} established almost everywhere convergence of $(C,1)$
means if $f\in L_{1}(\mathbb{T}),\mathbb{T}:=[-\pi ,\pi )$. The strong
summability problem, i.e. the convergence of the strong means 
\begin{equation}
\frac{1}{n+1}\sum\limits_{k=0}^{n}\left\vert S_{k}^{T}\left( x,f\right)
-f\left( x\right) \right\vert ^{p},\quad x\in \mathbb{T},\quad p>0,
\label{Hp}
\end{equation}%
was first considered by Hardy and Littlewood in \cite{H-L}. They showed that
for any $f\in L_{r}(\mathbb{T})~\left( 1<r<\infty \right) $ the strong means
tend to $0$ a.e., if $n\rightarrow \infty $. The Fourier series of $f\in
L_{1}(\mathbb{T})$ is said to be $\left( H,p\right) $-summable at $x\in T$,
if the values (\ref{Hp}) converge to $0$ as $n\rightarrow \infty $. The $%
\left( H,p\right) $-summability problem in $L_{1}(\mathbb{T})$ has been
investigated by Marcinkiewicz \cite{Ma} for $p=2$, and later by Zygmund \cite%
{Zy2} for the general case $1\leq p<\infty $. Oskolkov in \cite{Os} proved
the following: Let $f\in L_{1}(\mathbb{T})$ and let $\Phi $ be a continuous
positive convex function on $[0,+\infty )$ with $\Phi \left( 0\right) =0$
and 
\begin{equation*}
\ln \Phi \left( t\right) =O\left( t/\ln \ln t\right) \text{ \ \ \ }\left(
t\rightarrow \infty \right) .
\end{equation*}%
Then for almost all $x$%
\begin{equation}
\lim\limits_{n\rightarrow \infty }\frac{1}{n+1}\sum\limits_{k=0}^{n}\Phi
\left( \left\vert S_{k}^{T}\left( x,f\right) -f\left( x\right) \right\vert
\right) =0.  \label{osk}
\end{equation}

It was noted in \cite{Os} that Totik announced the conjecture that (\ref{osk}%
) holds almost everywhere for any $f\in L_{1}(\mathbb{T})$, provided 
\begin{equation*}
\ln \Phi \left( t\right) =O\left( t\right) \quad \left( t\rightarrow \infty
\right) .
\end{equation*}%
In \cite{Ro} Rodin proved

\begin{rodin1}
Let $f\in L_{1}(\mathbb{T})$. Then for any $A>0$ 
\begin{equation*}
\lim\limits_{n\rightarrow \infty }\frac{1}{n+1}\sum\limits_{k=0}^{n}\left(
\exp \left( A\left\vert S_{k}^{T}\left( x,f\right) -f\left( x\right)
\right\vert \right) -1\right) =0
\end{equation*}%
for a. e. $x\in \mathbb{T}$.
\end{rodin1}

Karagulyan \cite{Ka} proved that the following is true.

\begin{karagula}
Suppose that a continuous increasing function $\Phi :[0,\infty )\rightarrow
\lbrack 0,\infty ),\Phi \left( 0\right) =0$, satisfies the condition%
\begin{equation*}
\limsup_{t\rightarrow +\infty }\frac{\log \Phi \left( t\right) }{t}=\infty .
\end{equation*}%
Then there exists a function $f\in L_{1}(\mathbb{T})$ for which the relation%
\begin{equation*}
\limsup_{n\rightarrow \infty }\frac{1}{n+1}\sum\limits_{k=0}^{n}\Phi \left(
\left\vert S_{k}^{T}\left( x,f\right) \right\vert \right) =\infty
\end{equation*}%
holds everywhere on $\mathbb{T}$.
\end{karagula}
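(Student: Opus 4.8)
The plan is to produce the required function by an explicit construction, since the assertion is the sharpness counterpart of Theorem R. The guiding principle is that the positive exponential summability rests on an exponential-type control of the empirical distribution of partial sums: for $f\in L_{1}(\mathbb{T})$ the proportion $\frac{\#\{k\le n:|S_{k}^{T}(x,f)|\ge\lambda\}}{n+1}$ decays essentially like $e^{-c\lambda}$ for a.e. $x$, which is exactly what keeps $\frac{1}{n+1}\sum_{k}(e^{A|S_{k}^{T}|}-1)$ bounded. To defeat \emph{every} admissible $\Phi$ I would force this distribution to decay slower than any exponential. First I record the only way the hypothesis enters: since $\limsup_{t\to\infty}\log\Phi(t)/t=\infty$ and $\Phi$ is increasing, one may fix $t_{m}\uparrow\infty$ with $\Phi(t_{m})\ge e^{m t_{m}}$ for all $m$.

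The technical core is a building-block lemma. For each $m$ I would construct a trigonometric polynomial $g_{m}$, of degree $\nu_{m}$ and with controlled $L_{1}$-norm, such that for \emph{every} $x\in\mathbb{T}$ one has $|S_{k}^{T}(x,g_{m})|\ge h_{m}$ for at least a proportion $\rho_{m}$ of the indices $k\le\nu_{m}$. The natural mechanism is the Dirichlet kernel carried by a frequency window: if $g$ has unit coefficients on a block $[N,N+2\mu]$, then for $N\le k\le N+2\mu$ the ratio $|S_{k}^{T}(x,g)|=|\sin((k-N+1)x/2)|/|\sin(x/2)|$ is, at a fixed $x\ne 0$, comparable to $1/|x|$ for a positive proportion of $k$ in the window. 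Thus a single windowed kernel already produces a positive density of large partial sums near a point; translating such kernels to a net of centers covering $\mathbb{T}$ and shifting each to its own frequency band (to keep the resonant index sets disjoint) upgrades this to a statement valid for all $x$, at the cost of enlarging the degree and lowering $\rho_{m}$.

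Granting the block, the blow-up is immediate. I would set $f=\sum_{m}a_{m}g_{m}$ with the frequency bands lacunarily separated and $\sum_{m}a_{m}\|g_{m}\|_{1}<\infty$, so that $f\in L_{1}(\mathbb{T})$; the separation guarantees that on the frequency range of the $m$-th block the partial sums of $f$ reproduce those of $a_{m}g_{m}$ up to the fixed full contribution of the earlier blocks. Tuning the parameters so that the scaled height $a_{m}h_{m}$ equals $t_{m}$, the strong mean at the top index $n=\nu_{m}$ satisfies, for every $x$, $\frac{1}{n+1}\sum_{k\le n}\Phi(|S_{k}^{T}(x,f)|)\gtrsim\rho_{m}\,\Phi(t_{m})\ge\rho_{m}\,e^{m t_{m}}$. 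The role of the hypothesis is that the super-exponential factor $e^{m t_{m}}$ overwhelms the (at worst exponentially small) density $\rho_{m}$, so the right-hand side tends to $\infty$; as the estimate holds at every point, the $\limsup$ of the strong means is $+\infty$ everywhere on $\mathbb{T}$.

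The main obstacle is precisely the quantitative building-block lemma: one must simultaneously keep $\|g_{m}\|_{1}$ small enough for $L_{1}$-summability, keep the scaled height large, and prevent $\rho_{m}$ from decaying faster than $e^{-m t_{m}}$, all while demanding the estimate for every $x$ rather than almost every $x$. These requirements pull against one another, and this is exactly where the exponential threshold materializes: the density of large partial sums that a norm-controlled block can guarantee everywhere decays essentially exponentially in the height, so only a $\Phi$ growing faster than every exponential — the hypothesis — can tip the product $\rho_{m}\Phi(t_{m})$ to infinity. A secondary technical point is absorbing the interference of the earlier blocks' tails in the superposition, which the lacunary band separation, together with a suitable sign choice, is designed to handle.
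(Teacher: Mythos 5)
A preliminary remark: the paper does not prove Theorem K at all --- it is quoted from Karagulyan \cite{Ka} as background for the sharpness discussion --- so there is no internal proof to compare yours with, and your argument must stand on its own. Its architecture is the standard one for such counterexamples (building blocks exhibiting a height/density tradeoff for the partial sums, lacunary frequency separation, and the choice of $t_{m}$ with $\Phi (t_{m})\geq e^{mt_{m}}$), and you correctly locate the crux in the quantitative building-block lemma. The gap is that the mechanism you propose for that lemma demonstrably fails to produce the exponential tradeoff your final estimate requires. A block of unit coefficients of length $\mu $ has $L_{1}$-norm of order $\log \mu $ and yields height about $1/\left\vert x-c\right\vert $ near its centre $c$ on a positive proportion of the indices of its window; to guarantee height $h$ at \emph{every} $x$ by your net-of-translates device you need about $h$ centres in disjoint bands, so the superposition has $L_{1}$-norm of order $h\log \mu $ while the everywhere-guaranteed height is only $h$ and the density of good indices is about $1/h$. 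Consequently, the summability requirement $\sum_{m}a_{m}\left\Vert g_{m}\right\Vert _{1}<\infty $ forces $a_{m}h_{m}\rightarrow 0$, whereas your blow-up step needs $a_{m}h_{m}=t_{m}\rightarrow \infty $. The sentence asserting that ``the density a norm-controlled block can guarantee everywhere decays essentially exponentially in the height'' is therefore the statement of the key lemma, not a consequence of your construction: translated Dirichlet blocks give heights that are bounded after normalization, at polynomial (not exponential) cost in density.

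What the theorem really rests on is a lemma of the form: there are trigonometric polynomials $P_{N}$ with $\left\Vert P_{N}\right\Vert _{1}\leq C$ such that for every $x$ and every $\lambda $ up to about $\log N$ the set $\left\{ k\leq N:\left\vert S_{k}^{T}\left( x,P_{N}\right) \right\vert >\lambda \right\} $ has cardinality at least $Ne^{-C\lambda }$. Achieving this at \emph{every} point requires an iterated, multiplicative construction (tensor-power or Riesz-product type, as in \cite{Ka} and in the Walsh analogue \cite{JMAA}), in which heights accumulate across generations while norms stay bounded and densities shrink only geometrically; a single layer of translated kernels cannot do it, and this is precisely the missing step. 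Two further points are repairable but need care: the interference of earlier blocks at a point $x$ is the fixed quantity $\sum_{l<m}a_{l}g_{l}\left( x\right) $, which must be controlled via $\sum_{l<m}a_{l}\left\Vert g_{l}\right\Vert _{\infty }$ and made $o\left( t_{m}\right) $ by a recursive choice of parameters (a ``sign choice'' cannot produce pointwise cancellation at every $x$ simultaneously); and since the hypothesis only provides $\Phi \left( t_{m}\right) \geq e^{mt_{m}}$ along a sequence, the divergence is obtained only along the subsequence $n=\nu _{m}$, which suffices for a $\limsup $ but should be stated explicitly.
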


For Walsh system Rodin \cite{rodin} (see also Schipp \cite{Sch}) proved that
the following is true.

\begin{rodin2}[Rodin]
If $\Phi (t):[0,\infty )\rightarrow \lbrack 0,\infty )$, $\Phi (0)=0$, is an
increasing continuous function satisfying 
\begin{equation}
\limsup_{t\rightarrow \infty }\frac{\log \Phi (t)}{t}<\infty ,  \label{rod}
\end{equation}%
then the partial sums of Walsh-Fourier series of any function $f\in
L_{1}\left( \mathbb{I}\right) $ satisfy the condition%
\begin{equation*}
\lim\limits_{n\rightarrow \infty }\frac{1}{n}\sum\limits_{k=1}^{n}\Phi
\left( \left\vert S_{k}\left( x;f\right) -f\left( x\right) \right\vert
\right) =0
\end{equation*}
\end{rodin2}

almost everywhere on $\mathbb{I}$.

In the paper \cite{JMAA} we established, that, as in trigonometric case \cite%
{Ka}, the bound (\ref{rod}) is sharp for a.e. $\Phi $-summability of
Walsh-Fourier series. Moreover, we prove

\begin{ggk1}
If an increasing function $\Phi (t):[0,\infty )\rightarrow \lbrack 0,\infty
) $ satisfies the condition%
\begin{equation*}
\limsup_{t\rightarrow \infty }\frac{\log \Phi (t)}{t}=\infty ,
\end{equation*}%
then there exists a function $f\in L_{1}\left( \mathbb{I}\right) $ such that 
\begin{equation*}
\limsup_{n\rightarrow \infty }\frac{1}{n}\sum\limits_{k=1}^{n}\Phi \left(
\left\vert S_{k}\left( x;f\right) \right\vert \right) =\infty
\end{equation*}%
holds everywhere on $\mathbb{I}$.
\end{ggk1}

The two-dimensional Fej\'er summability of $f\in L\log ^{+}L(\mathbb{I}^{2})$
was proved by Zygmund \cite{Zy2} for trigonometric Fourier series and by Mó%
ricz, Schipp and Wade \cite{msw} (see also Weisz \cite{WeC2}) for
Walsh-Fourier series. The two-dimensional strong summability, i. e.%
\begin{equation*}
\frac{1}{2^{n_{1}+n_{2}}}\sum\limits_{k_{1}=0}^{n_{1}-1}\sum%
\limits_{k_{2}=0}^{n_{2}-1}\left\vert S_{k_{1}k_{2}}\left(
x_{1},x_{2};f\right) -f\left( x_{1},x_{2}\right) \right\vert ^{p}\rightarrow
0\text{ \ \ a. e. \ as }n\rightarrow \infty
\end{equation*}%
was shown by Gogoladze \cite{GogStr} for trigonometric Fourier series and
for $f\in L\log ^{+}L(\mathbb{T}^{2})$. The same result for more-dimensional
Walsh-Fourier series is due to Rodin \cite{Ro1} (see also Weisz \cite{We}).
These results show that in the case of two dimensional functions the $%
(C;1,1) $ summability and $(C;1,1)$ strong summability we have the same
maximal convergence spaces. That is, in both cases we have $L\log ^{+}L$.

It is proved in (\cite{CA2}) a BMO-estimation for quadratic partial sums of
two-dimensional trigonometric Fourier series from which it is derived an
almost everywhere exponential summability of quadratic partial sums of
double Fourier series.

The results on strong summation and approximation of trigonometric Fourier
series have been extended for several other orthogonal systems. For
instance, concerning the Walsh system see Schipp \cite{Sch1,Sch2,Sch3},
Fridli and Schipp \cite{FS,FS2}, Leindler \cite{Le1,Le2,Le3,Le4}, Totik \cite%
{To1,To2,To3}, Rodin \cite{Ro1}, Weisz \cite{We,WeL}, Gabisonia \cite{Ga},
Goginava, Gogoladze \cite{GGArm}.

The problems of summability of multiple Fourier series have been
investigated by Gogoladze \cite{Gog1,Gog2}, Wang \cite{Wa}, Zhag \cite{ZhHe}%
, Glukhov \cite{Gl}, Goginava \cite{GoJAT, GoSM}, Goginava, Gogoladze \cite%
{GG}, Gat, Goginava, Karagulyan \cite{AM}.

\section{Main Results}

In this paper we study a BMO-estimation for rectangular partial sums of
two-dimensional Walsh-Fourier series.

\begin{theorem}
\label{bmo}If $f\in L\left( \log L\right) ^{2}\left( \mathbb{I}^{2}\right) $%
, then%
\begin{equation*}
\left\vert \left\{ \left( x_{1},x_{2}\right) \in \mathbb{I}^{2}:\text{BMO}%
\left[ S_{n_{1}n_{2}}\left( x_{1},x_{2};f\right) \right] >\lambda \right\}
\right\vert \lesssim \frac{1}{\lambda }\left( 1+\int\limits_{\mathbb{I}%
^{2}}|f|\left( \log |f|\right) ^{2}\right) .
\end{equation*}
\end{theorem}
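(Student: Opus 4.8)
The plan is to fix the point $\left( x_{1},x_{2}\right) $, unwind the supremum defining $\mathrm{BMO}\left[ S_{n_{1}n_{2}}\left( x_{1},x_{2};f\right) \right] $ into concrete $L^{2}$ oscillations of the partial-sum array over dyadic index blocks, dominate these by one-parameter maximal operators in each variable, and let the two-parameter weak-type estimate produce the $L\left( \log L\right) ^{2}$ loss. First I would make the reduction explicit. For fixed $\left( n_{1},n_{2}\right) $ the function $\varphi _{n_{1}n_{2}}(t_{1},t_{2})=\sum_{k_{1}<2^{n_{1}}}\sum_{k_{2}<2^{n_{2}}}S_{k_{1}k_{2}}(x_{1},x_{2};f)\,\mathbb{I}_{\delta _{k_{1}}^{n_{1}}}(t_{1})\mathbb{I}_{\delta _{k_{2}}^{n_{2}}}(t_{2})$ is constant on dyadic rectangles of sides $2^{-n_{1}},2^{-n_{2}}$. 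A dyadic square $Q=\delta _{j_{1}}^{m}\times \delta _{j_{2}}^{m}$ with $m\geq \max (n_{1},n_{2})$ then lies inside one such rectangle and contributes nothing; for $m<\max (n_{1},n_{2})$ the mean square oscillation of $\varphi _{n_{1}n_{2}}$ over $Q$ is precisely the $L^{2}$ oscillation of the array $\{S_{k_{1}k_{2}}\}$ as $\left( k_{1},k_{2}\right) $ runs over a dyadic index block, varying in $t_{1}$ exactly when $m<n_{1}$ and in $t_{2}$ exactly when $m<n_{2}$. Thus $\mathrm{BMO}\left[ S_{n_{1}n_{2}}\right] $ becomes a supremum, over all dyadic blocks in $\mathbb{N}^{2}$, of block oscillations; the regime $m<\min (n_{1},n_{2})$ (oscillation in both variables) and the regime $\min \leq m<\max $ (one variable only) must both be treated.

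Next I would exploit the multiplicative dyadic structure of the Walsh system, variable by variable through the iterated sums $S^{(1)},S^{(2)}$. Writing a block index as $k_{i}=a_{i}2^{s_{i}}+\rho _{i}$ with $0\leq \rho _{i}<2^{s_{i}}$, the non-overlap of binary digits gives $w_{a_{i}2^{s_{i}}+\rho _{i}}=w_{a_{i}2^{s_{i}}}w_{\rho _{i}}$, so that $S_{k_{1}k_{2}}(x_{1},x_{2};f)=w_{a_{1}2^{s_{1}}}(x_{1})\,w_{a_{2}2^{s_{2}}}(x_{2})\,S_{\rho _{1}\rho _{2}}(x_{1},x_{2};F)$, where $F(u_{1},u_{2}):=f(u_{1},u_{2})\,w_{a_{1}2^{s_{1}}}(u_{1})\,w_{a_{2}2^{s_{2}}}(u_{2})$ satisfies $|F|=|f|$. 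Since the modulating factors have modulus one, each block oscillation equals the oscillation of the \emph{initial-segment} array $\{S_{\rho _{1}\rho _{2}}(x_{1},x_{2};F)\}_{0\leq \rho _{i}<2^{s_{i}}}$. Subtracting, in each variable in turn, the conditional expectations $S_{2^{s_{i}}}^{(i)}(\cdot ;F)$ (dyadic martingale values, hence dominated by the iterated dyadic maximal function of $|f|$, uniformly in the modulation) isolates the genuinely oscillatory part from the parts already controlled by a strong maximal function.

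I would then dominate the remaining oscillation by the one-parameter block-oscillation operator $\mathcal{O}f(t):=\sup_{s,a}\bigl( 2^{-s}\sum_{\rho =0}^{2^{s}-1}|S_{2^{s}}(t;G)-S_{\rho }(t;G)|^{2}\bigr) ^{1/2}$, with $G=f\,w_{a2^{s}}$, applied successively in the two variables, together with Fej\'er/martingale terms absorbed by the strong maximal function. The orthogonality computation
\[
\int_{\mathbb{I}}2^{-s}\sum_{\rho =0}^{2^{s}-1}\bigl| S_{2^{s}}(t;G)-S_{\rho }(t;G)\bigr| ^{2}\,dt=2^{-s}\sum_{r=0}^{2^{s}-1}(r+1)\,|\widehat{G}(r)|^{2}\leq \Vert f\Vert _{2}^{2}
\]
shows that $\mathcal{O}$ is uniform in the scale on $L^{2}$, which is the structural fact that makes the block averages far gentler than the full Carleson maximal operator; the endpoint estimate for $\mathcal{O}$ then carries a single logarithm. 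Applying this in each of the two independent directions and reassembling the pieces over the square $Q$ should yield the distribution-function bound $\lesssim \frac{1}{\lambda }\bigl( 1+\int_{\mathbb{I}^{2}}|f|(\log |f|)^{2}\bigr) $.

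The hard part is the last step: proving the correct weak-type bound for $\mathcal{O}$ with exactly one logarithm of loss, and then coupling the two variables while respecting the \emph{square} geometry of the $\mathrm{BMO}$ supremum. The scales $n_{1},n_{2}$ are a priori independent, but the oscillation is tested only on squares (a single common $m$), so the two one-parameter estimates must be joined at a common scale without spending an extra logarithm, and the mixed regime $\min \leq m<\max $ must be reconciled with the doubly-oscillating regime $m<\min $. It is precisely this two-parameter coupling, one logarithm per direction, that consumes the hypothesis $f\in L\left( \log L\right) ^{2}$; estimating the cross oscillation, the part that is a martingale difference in neither variable separately, is the most delicate bookkeeping and is the analogue, for rectangular Walsh sums, of the quadratic trigonometric estimate of \cite{CA2}.
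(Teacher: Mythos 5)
Your overall strategy---unwinding $\mathrm{BMO}\left[ S_{n_{1}n_{2}}\right]$ into $L^{2}$ oscillations of the partial-sum array over dyadic index blocks, discarding the regime $m\geq \max (n_{1},n_{2})$, and using digit-disjointness $w_{a2^{s}+\rho }=w_{a2^{s}}w_{\rho }$ to reduce a block to an initial segment of a modulated function---is the same as the paper's. But there are two genuine gaps. First, the identity $S_{k_{1}k_{2}}(x_{1},x_{2};f)=w_{a_{1}2^{s_{1}}}(x_{1})w_{a_{2}2^{s_{2}}}(x_{2})S_{\rho _{1}\rho _{2}}(x_{1},x_{2};F)$ is false: the correct formula is the four-term expansion
\begin{equation*}
S_{a_{1}2^{s_{1}}+\rho _{1},a_{2}2^{s_{2}}+\rho _{2}}\left( f\right)
=S_{a_{1}2^{s_{1}},a_{2}2^{s_{2}}}\left( f\right) +w_{a_{1}2^{s_{1}}}\left(
x_{1}\right) S_{\rho _{1},a_{2}2^{s_{2}}}\left( fw_{a_{1}2^{s_{1}}}\right)
+\cdots ,
\end{equation*}
and only the first term is constant on the block. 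The two cross terms survive the subtraction of the block mean, and they are \emph{not} the martingale quantities $S_{2^{s_{i}}}^{(i)}(\cdot ;F)$ you propose to subtract: the displayed cross term equals $w_{a_{1}2^{s_{1}}}(x_{1})\,S_{\rho _{1}}^{(1)}\bigl( S_{a_{2}2^{s_{2}}}^{(2)}(f)\,w_{a_{1}2^{s_{1}}}\bigr) $, where $S_{a_{2}2^{s_{2}}}^{(2)}(f)$ is a partial sum of $f$ at a \emph{general} index in the second variable, not a conditional expectation. Taking the supremum over blocks therefore produces the one-dimensional maximal partial-sum (Carleson) operator $S_{\ast }^{(2)}$, which is not weak $(1,1)$ and is certainly not dominated by the dyadic or strong maximal function. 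This is exactly where the hypothesis enters: the paper invokes Tateoka's bound $\int_{\mathbb{I}}S_{\ast }^{(2)}(x_{1},\cdot ;f)\lesssim \int_{\mathbb{I}}|f|(\log ^{+}|f|)^{2}+1$ for the cross terms and then applies the weak $(1,1)$ operator $V^{(1)}$ in the other variable. Your accounting ``one logarithm per direction'' misplaces the source of $(\log L)^{2}$: the doubly-oscillating main term costs only \emph{one} logarithm (weak $(1,1)$ of $V^{(1)}$ composed with the $L\log L\rightarrow L^{1}$ bound of $V^{(2)}$), and the second logarithm is forced by the cross terms through the Carleson operator.

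Second, the endpoint bound for your one-dimensional block-oscillation operator $\mathcal{O}$ is the engine of the whole argument and you leave it unproven; the Parseval identity you compute is only the easy $L^{2}$ statement. The paper does not prove it from scratch either---it imports Schipp's operator $V$ with its weak $(1,1)$ and $L\log ^{+}L\rightarrow L^{1}$ bounds (Theorems Sch1 and Sch2), and adds two lemmas: a genuinely two-parameter majorization of $\bigl\{ 2^{-m_{1}-m_{2}}\sum_{n_{1}<2^{m_{1}}}\sum_{n_{2}<2^{m_{2}}}|f\ast (D_{n_{1}}\otimes D_{n_{2}})|^{2}\bigr\} ^{1/2}$ by $V_{m_{1}m_{2}}(|f|)$, and the iteration $V_{m_{1}m_{2}}(|f|)\lesssim V^{(1)}(V^{(2)}(|f|))$ which lets the two variables be handled one at a time. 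Without either a proof of the weak-type bound for $\mathcal{O}$ or a reduction to such known results, the argument is incomplete; the mixed regime $\min (n_{1},n_{2})\leq m<\max (n_{1},n_{2})$, which you flag but do not treat, is then handled analogously once these pieces are in place.
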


The following theorem shows that the rectangular sums of two-dimensional
Walsh-Fourier series of a function $f\in L\left( \log L\right) ^{2}\left( 
\mathbb{I}^{2}\right) $ are almost everywhere exponentially summable to the
function $f$. It will be obtained from the previous theorem (see \cite{CA2})
by using the John-Nirenberg theorem.

\begin{theorem}
\label{a.e.exp} Suppose that $f\in L\left( \log L\right) ^{2}\left( \mathbb{I%
}^{2}\right) $. Then for any $A>0$%
\begin{equation*}
\lim\limits_{m_{1},m_{2}\rightarrow \infty }\frac{1}{2^{m_{1}+m_{2}}}%
\sum\limits_{n_{1}=0}^{2^{m_{1}}-1}\sum\limits_{n_{2}=0}^{2^{m_{2}}-1}\left(
\exp \left( A\left\vert S_{n_{1}n_{2}}\left( x_{1},x_{2};f\right) -f\left(
x_{1},x_{2}\right) \right\vert \right) -1\right) =0
\end{equation*}%
for a. e. $\left( x_{1},x_{2}\right) \in \mathbb{I}^{2}$.
\end{theorem}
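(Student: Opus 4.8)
The plan is to reinterpret the strong exponential mean as the integral of a single step function and then feed it into the John--Nirenberg inequality. Fix $(x_1,x_2)$ and scales $(m_1,m_2)$ and introduce
\[
F(t_1,t_2):=\sum_{n_1=0}^{2^{m_1}-1}\sum_{n_2=0}^{2^{m_2}-1}S_{n_1n_2}(x_1,x_2;f)\,\mathbb{I}_{\delta_{n_1}^{m_1}}(t_1)\,\mathbb{I}_{\delta_{n_2}^{m_2}}(t_2).
\]
Since each dyadic cell $\delta_{n_1}^{m_1}\times\delta_{n_2}^{m_2}$ has measure $2^{-(m_1+m_2)}$, for every increasing $\Phi$ with $\Phi(0)=0$ one has the exact identity
\[
\frac{1}{2^{m_1+m_2}}\sum_{n_1=0}^{2^{m_1}-1}\sum_{n_2=0}^{2^{m_2}-1}\Phi\bigl(|S_{n_1n_2}(x_1,x_2;f)-f(x_1,x_2)|\bigr)=\int_{\mathbb{I}^2}\Phi\bigl(|F(t)-f(x_1,x_2)|\bigr)\,dt.
\]
Taking $\Phi(u)=e^{Au}-1$ reduces the theorem to showing that this integral tends to $0$ for a.e.\ $(x_1,x_2)$. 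Two facts feed in: first, the mean $F_{\mathbb{I}^2}$ of $F$ over the unit square equals the rectangular $(C,1,1)$ mean $\sigma_{2^{m_1},2^{m_2}}(x_1,x_2;f)$, which converges to $f(x_1,x_2)$ a.e.\ by the M\'oricz--Schipp--Wade theorem \cite{msw} (applicable since $L(\log L)^2\subset L\log^+L$); second, by Theorem~\ref{bmo} the BMO norm of $F$ is bounded, for \emph{every} scale, by $B(x_1,x_2):=\mathrm{BMO}[S_{n_1n_2}(x_1,x_2;f)]$, which is finite a.e.

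With these in hand I would center $F$ at its mean, splitting $|F(t)-f|\le|F(t)-F_{\mathbb{I}^2}|+|F_{\mathbb{I}^2}-f|$, so that
\[
\int_{\mathbb{I}^2}e^{A|F-f|}\,dt\le e^{A|\sigma_{2^{m_1},2^{m_2}}(x;f)-f(x)|}\int_{\mathbb{I}^2}e^{A|F-F_{\mathbb{I}^2}|}\,dt.
\]
The prefactor tends to $1$ a.e.\ by the Fej\'er convergence above. For the remaining integral I would invoke the John--Nirenberg inequality for dyadic BMO on $\mathbb{I}^2$: if $\|F\|_{BMO}=b$ then $\int_{\mathbb{I}^2}(e^{\alpha|F-F_{\mathbb{I}^2}|}-1)\,dt\lesssim \alpha b/(C-\alpha b)$ whenever $\alpha b<C$, a bound that tends to $0$ as $b\to 0$. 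Thus the whole argument hinges on forcing the BMO norm of the relevant step function to be genuinely small.

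To achieve this for every $A>0$ I would decompose $f=P+g$, where $P$ is a two-dimensional Walsh polynomial and $g$ has arbitrarily small $L(\log L)^2$ content. For the polynomial part the rectangular sums stabilize, $S_{n_1n_2}(x;P)=P(x)$ once $n_1,n_2>\deg P$, and the partial sums are uniformly bounded; hence only a vanishing density of boundary indices survives and the strong exponential mean of the $P$-part tends to $0$ everywhere. For the remainder one applies Theorem~\ref{bmo} to $g$ to control the measure of $\{x:B_g(x)>C/(2A)\}$, and then combines the two parts through a Cauchy--Schwarz splitting of the averaging measure (writing $S(h)=S_{n_1n_2}(x_1,x_2;h)$),
\[
\frac{1}{2^{m_1+m_2}}\sum e^{A|S(f)-f|}\le\Bigl(\tfrac{1}{2^{m_1+m_2}}\sum e^{2A|S(P)-P|}\Bigr)^{1/2}\Bigl(\tfrac{1}{2^{m_1+m_2}}\sum e^{2A|S(g)-g|}\Bigr)^{1/2},
\]
the first factor tending to $1$ and, on the complement of a small exceptional set where $2A\,B_g(x)$ is subcritical, the second being forced close to $1$ by John--Nirenberg. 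Letting the $L(\log L)^2$ content of $g$ shrink along a sequence and intersecting the exceptional sets over $A\in\mathbb{N}$ (and over $\varepsilon=1/k$) then yields the a.e.\ statement.

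The main obstacle is precisely this last reduction. For small, fixed $A$ the a.e.\ finiteness of $B(x_1,x_2)$ from Theorem~\ref{bmo} together with John--Nirenberg already dominates the layer-cake integral $\int_0^\infty Ae^{A\lambda}\,|\{t:|F(t)-f|>\lambda\}|\,d\lambda$; but covering \emph{arbitrarily large} $A$ requires driving the BMO norm of the relevant step function below the John--Nirenberg threshold $C/A$, which can only come from the quantitative weak-type estimate of Theorem~\ref{bmo} applied to a small remainder $g$. Making this smallness uniform in the scales $(m_1,m_2)$, while simultaneously keeping the rectangular Fej\'er centering under control in both parameters independently, is the delicate two-parameter point that distinguishes the present argument from the one-dimensional case and from the diagonal (quadratic) case treated in \cite{CA2}.
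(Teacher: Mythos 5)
Your overall route is exactly the one the paper intends: the paper offers no written proof of Theorem \ref{a.e.exp} beyond the remark that it is obtained from Theorem \ref{bmo} via the John--Nirenberg theorem as in \cite{CA2}, and your reconstruction --- rewriting the strong mean as $\int_{\mathbb{I}^2}\Phi(|F-f(x_1,x_2)|)$ for the two-parameter step function $F$, centering at $F_{\mathbb{I}^2}=\sigma_{2^{m_1},2^{m_2}}(x_1,x_2;f)$, invoking M\'oricz--Schipp--Wade \cite{msw} for the centering term and John--Nirenberg for the oscillation, then splitting $f=P+g$ with $P$ a Walsh polynomial so as to make the relevant BMO norm subcritical for a given $A$ --- is that argument transplanted to the rectangular Walsh setting. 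Those steps, including the Cauchy--Schwarz splitting and the treatment of the polynomial part, are sound.

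The one place where your write-up does not close is the step you yourself flag as the main obstacle: ``one applies Theorem~\ref{bmo} to $g$ to control the measure of $\{x:B_g(x)>C/(2A)\}$.'' Theorem \ref{bmo} gives
\begin{equation*}
\bigl|\{B_g>\lambda\}\bigr|\lesssim\frac{1}{\lambda}\Bigl(1+\int_{\mathbb{I}^2}|g|\bigl(\log^+|g|\bigr)^2\Bigr),
\end{equation*}
and the additive $1$ does not shrink when the $L(\log L)^2$ content of $g$ does; with $\lambda=C/(2A)$ fixed the right-hand side stays of size $2A/C$ no matter how small $g$ is, so the exceptional set does not tend to zero and the a.e.\ conclusion for large $A$ does not follow as written. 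The gap is repairable by exploiting the homogeneity $B_{cg}=|c|\,B_g$ of the BMO functional: applying Theorem \ref{bmo} to $g/\varepsilon$ at level $\lambda/\varepsilon$ yields
\begin{equation*}
\bigl|\{B_g>\lambda\}\bigr|\lesssim\frac{\varepsilon}{\lambda}+\frac{1}{\lambda}\int_{\mathbb{I}^2}|g|\Bigl(\log^+\frac{|g|}{\varepsilon}\Bigr)^2\lesssim\frac{\varepsilon}{\lambda}+\frac{1}{\lambda}\Bigl(1+\log^2\frac{1}{\varepsilon}\Bigr)\Bigl(\Vert g\Vert_1+\int_{\mathbb{I}^2}|g|\bigl(\log^+|g|\bigr)^2\Bigr),
\end{equation*}
so one should first fix $\varepsilon$ and then choose the remainder $g$ with both $\Vert g\Vert_1$ and its $L(\log L)^2$ modular small depending on $\varepsilon$. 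With that dilation step inserted, your exhaustion over $A\in\mathbb{N}$ and over the smallness parameters goes through and the proof is complete.
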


\section{Auxiliary Results}

Schipp in \cite{Sch} introduce the following operator%
\begin{equation*}
V_{n}\left( x;f\right) :=\left( \sum\limits_{l=0}^{2^{n}-1}\left(
\int\limits_{l2^{-n}}^{\left( l+1\right)
2^{-n}}\sum\limits_{j=0}^{n-1}2^{j-1}\mathbb{I}_{I_{j}}\left( t\right)
S_{2^{n}}\left( x\dotplus t\dotplus e_{j},f\right) dt\right) ^{2}\right)
^{1/2},
\end{equation*}

Let%
\begin{equation*}
V\left( f\right) :=\sup\limits_{n}V_{n}\left( f\right) .
\end{equation*}

The following theorem is proved by Schipp.

\begin{schipp}[\protect\cite{Sch}]
\label{SchD} Let $f\in L_{1}\left( \mathbb{I}\right) $. Then%
\begin{equation*}
\mu \left\{ \left\vert Vf\right\vert >\lambda \right\} \lesssim \frac{%
\left\Vert f\right\Vert _{1}}{\lambda }
\end{equation*}%
and%
\begin{equation*}
\left\Vert Vf\right\Vert \lesssim 1+\int\limits_{\mathbb{I}}\left\vert
f\right\vert \log ^{+}\left\vert f\right\vert \text{ \ \ }\left( f\in L\log
^{+}L\left( \mathbb{I}\right) \right) .
\end{equation*}
\end{schipp}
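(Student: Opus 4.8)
The plan is to regard $V$ as a square-function maximal operator attached to the dyadic (Walsh--Paley) martingale, and to prove the two assertions in the standard two-step fashion: first a strong $L^{2}$ bound together with the stated weak $(1,1)$ bound, and then the $L\log^{+}L$ estimate by extrapolation from these. The starting point is the identity $S_{2^{n}}(\cdot,f)=E_{n}f$, where $E_{n}$ is the conditional expectation onto the $\sigma$-algebra generated by the dyadic intervals of length $2^{-n}$. This follows from $D_{2^{n}}=2^{n}\mathbb{I}_{I_{n}}$, since then $S_{2^{n}}(x,f)=2^{n}\int_{I_{n}(x)}f=E_{n}f(x)$. Substituting this into the definition of $V_{n}$ and using that both $E_{n}f$ and $\mathbb{I}_{I_{j}}$ (for $j<n$) are constant on every dyadic interval of length $2^{-n}$, the inner integral collapses to a finite weighted sum of values $E_{n}f(x\dotplus l2^{-n}\dotplus e_{j})$; thus $V_{n}f$ is a genuine square function of dyadic averages of $f$ sampled at dyadically translated points, and $V=\sup_{n}V_{n}$ is the associated maximal operator. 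The structural facts I would use throughout are that dyadic translation $t\mapsto t\dotplus a$ is measure preserving and permutes the dyadic intervals of each fixed level, and that $E_{n}$ localizes: if $b$ has mean zero on a dyadic interval $Q$ of length $2^{-k}$, then $E_{n}b=0$ for $n\le k$ and $\operatorname{supp}E_{n}b\subset Q$ for $n>k$.

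For the $L^{2}$ bound I would expand $\|V_{n}f\|_{2}^{2}$, invoke Walsh (equivalently martingale) orthogonality, and check that the weights $2^{j-1}$, the factor $2^{-n}$ from each integral, and the summation over the $2^{n}$ translates $l$ balance so as to give $\|V_{n}f\|_{2}\lesssim\|f\|_{2}$ uniformly in $n$. To handle the supremum I would dominate $V$ (pointwise, or in $L^{2}$) by a Littlewood--Paley / martingale square function $Sf$ for which $\|Sf\|_{2}\approx\|f\|_{2}$, so that $\|Vf\|_{2}\lesssim\|f\|_{2}$.

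For the weak $(1,1)$ bound I would run a Calder\'on--Zygmund (Gundy) decomposition of $f$ at height $\lambda$, writing $f=g+b$ with $|g|\lesssim\lambda$, $\|g\|_{1}\lesssim\|f\|_{1}$, and $b=\sum_{Q}b_{Q}$, each $b_{Q}$ supported on a dyadic interval $Q$, of mean zero, with $\int|b_{Q}|\lesssim\lambda|Q|$ and $\sum_{Q}|Q|\lesssim\|f\|_{1}/\lambda$. The good part is handled by Chebyshev together with the $L^{2}$ bound: $\mu\{Vg>\lambda/2\}\lesssim\|Vg\|_{2}^{2}/\lambda^{2}\lesssim\|g\|_{2}^{2}/\lambda^{2}\lesssim\|f\|_{1}/\lambda$, the last step using $\|g\|_{2}^{2}\lesssim\lambda\|g\|_{1}$. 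For the bad part one discards the exceptional set $\Omega:=\bigcup_{Q}Q$, of measure $\lesssim\|f\|_{1}/\lambda$, and, off $\Omega$, uses the mean-zero property of the $b_{Q}$ together with the localization of $E_{n}$ to show an off-diagonal $L^{1}$ estimate $\int_{\mathbb{I}\setminus\Omega}Vb\lesssim\sum_{Q}\int|b_{Q}|\lesssim\|f\|_{1}$; a further Chebyshev step then yields the weak bound.

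Finally, the $L\log^{+}L$ estimate follows from the weak $(1,1)$ and $L^{2}$ bounds by the standard distributional (Zygmund) argument: splitting $f=f\,\mathbb{I}_{\{|f|>\lambda/2\}}+f\,\mathbb{I}_{\{|f|\le\lambda/2\}}$ and using the sublinearity of $V$,
\begin{equation*}
\int_{\mathbb{I}}Vf=\int_{0}^{\infty}\mu\{Vf>\lambda\}\,d\lambda\lesssim 1+\int_{1}^{\infty}\Big(\frac{\|f\,\mathbb{I}_{\{|f|>\lambda/2\}}\|_{1}}{\lambda}+\frac{\|f\,\mathbb{I}_{\{|f|\le\lambda/2\}}\|_{2}^{2}}{\lambda^{2}}\Big)\,d\lambda ,
\end{equation*}
where $\int_{0}^{1}\mu\{Vf>\lambda\}\,d\lambda\le 1$ accounts for the additive constant; carrying out the $\lambda$-integrations by Fubini produces exactly $1+\int_{\mathbb{I}}|f|\log^{+}|f|$. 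I expect the main obstacle to be the bad-part estimate in the weak $(1,1)$ bound: the dyadic shifts $e_{j}$ and the variable $t$ move the supports of the atoms $b_{Q}$ across scales, and the square-function structure couples the levels $j<n$ together with the supremum over $n$. Making the localization quantitative uniformly in $n$, so that the cross terms from distinct scales and translates remain summable against $\sum_{Q}\int|b_{Q}|$, is the delicate point, and is precisely where the measure preservation of dyadic translation and the exact normalization of the weights $2^{j-1}$ must be used.
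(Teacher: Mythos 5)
First, a point of reference: the paper does not prove this statement at all. It is Theorem Sch1, imported verbatim from Schipp \cite{Sch}, and the present paper uses it only as a black box. So your attempt can only be measured against Schipp's original argument, whose overall strategy ($\sigma$-sublinearity, an $L_{2}$ bound, quasi-locality of $V$, then the standard Calder\'on--Zygmund machinery for the weak $(1,1)$ estimate and a Zygmund-type extrapolation to $L\log ^{+}L$) your outline does reproduce in broad strokes. The reduction $S_{2^{n}}(\cdot ;f)=E_{n}f$, the good/bad splitting, and the final $\lambda$-integration producing $1+\int |f|\log ^{+}|f|$ are all correctly set up, and the localization properties of $E_{n}$ you record are the right tools.

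As a proof, however, your text has two genuine gaps, and they are exactly where the work is. (1) You only sketch a uniform bound $\left\Vert V_{n}f\right\Vert _{2}\lesssim \left\Vert f\right\Vert _{2}$, whereas the theorem concerns $V=\sup_{n}V_{n}$; uniform boundedness of the $V_{n}$ does not bound the supremum. Your one-line fix --- dominate $V$ pointwise by a martingale square function --- is not substantiated: $V_{n}f(x)$ is not monotone in $n$, the level-$n$ expression involves $E_{n}f$ evaluated at all the shifted points $x\dotplus l2^{-n}\dotplus e_{j}$, $j<n$, simultaneously, and no majorant is actually exhibited. (2) The bad-part estimate is precisely the quasi-locality of $V$, and you explicitly defer it as ``the delicate point.'' It is not routine: because of the translations by $e_{j}$ (in particular $e_{0}=1/2$), $Vb_{Q}$ for a mean-zero atom $b_{Q}$ supported on a dyadic interval $Q$ of level $k$ is \emph{not} supported near $Q$; the $j$-th contribution lives on $Q\dotplus I_{j}$, a set of measure $2^{-j}\gg |Q|$ when $j<k$, and one must verify that the weight $2^{j-1}$, the normalization $2^{-n}$ from the $t$-integral, and the $\ell ^{2}$-sum over $l$ conspire to give $\int_{\mathbb{I}\setminus Q}Vb_{Q}\lesssim \int |b_{Q}|$ uniformly over all scales $n$. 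Until (1) and (2) are carried out, what you have is a correct plan consistent with Schipp's route, not a proof.
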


\begin{SCH2}[\protect\cite{Sch}]
\label{SchD2}The following estimation holds%
\begin{equation*}
\left\{ \frac{1}{2^{n}}\sum\limits_{m=0}^{2^{n}-1}\left\vert S_{m}\left(
x;f\right) \right\vert ^{2}\right\} ^{1/2}\lesssim V_{n}\left( x;\left\vert
f\right\vert \right) .
\end{equation*}
\end{SCH2}

Set%
\begin{equation*}
V_{m_{1}m_{2}}\left( x_{1},x_{2};f\right)
\end{equation*}%
\begin{equation*}
:=\left(
\sum\limits_{l_{1}=0}^{2^{m_{1}}-1}\sum\limits_{l_{2}=0}^{2^{m_{2}}-1}\left(
\int\limits_{l_{1}2^{-m_{1}}}^{\left( l_{1}+1\right)
2^{-m_{1}}}\int\limits_{l_{2}2^{-m_{2}}}^{\left( l_{2}+1\right)
2^{-m_{2}}}\sum\limits_{j_{1}=0}^{m_{1}-1}2^{j_{1}-1}\mathbb{I}%
_{I_{j_{1}}}\left( t_{1}\right) \sum\limits_{j_{2}=0}^{m_{2}-1}2^{j_{2}-1}%
\mathbb{I}_{I_{j_{2}}}\left( t_{2}\right) \right. \right.
\end{equation*}%
\begin{equation*}
\left. \left. \times S_{2^{m_{1}}2^{m_{2}}}\left( x_{1}\dotplus
t_{1}\dotplus e_{j_{1}},x_{2}\dotplus t_{2}\dotplus e_{j_{2}};f\right)
dt_{1}dt_{2}\right) ^{2}\right) ^{1/2}.
\end{equation*}

For a two-dimensional integrable function $f$ we need to introduce the
following functions 
\begin{eqnarray}
&&V_{n}^{\left( 1\right) }\left( x_{1},x_{2};f\right)  \label{V1} \\
&:&=\left( \sum\limits_{l=0}^{2^{n}-1}\left( \int\limits_{l2^{-n}}^{\left(
l+1\right) 2^{-n}}\sum\limits_{j=0}^{n-1}2^{j-1}\mathbb{I}_{I_{j}}\left(
t\right) S_{2^{n}}^{(1)}\left( x_{1}\dotplus t\dotplus e_{j},x_{2};f\right)
^{2}dt\right) ^{2}\right) ^{1/2},  \notag
\end{eqnarray}%
\begin{eqnarray}
&&V_{n}^{\left( 2\right) }\left( x_{1},x_{2};f\right)  \label{V2} \\
&:&=\left( \sum\limits_{l=0}^{2^{n}-1}\left( \int\limits_{l2^{-n}}^{\left(
l+1\right) 2^{-n}}\sum\limits_{j=0}^{n-1}2^{j-1}\mathbb{I}_{I_{j}}\left(
t\right) S_{2^{n}}^{(2)}\left( x_{1},x_{2}\dotplus t\dotplus e_{j};f\right)
dt\right) ^{2}\right) ^{1/2},  \notag
\end{eqnarray}%
\begin{equation*}
V^{\left( s\right) }\left( x_{1},x_{2};f\right) :=\sup\limits_{n}\left\vert
V_{n}^{\left( s\right) }\left( x_{1},x_{2};f\right) \right\vert ,\text{ \ \ }%
s=1,2.
\end{equation*}

\begin{lemma}
\label{2-estimation}The following estimation holds%
\begin{equation*}
\left\{ \frac{1}{2^{m_{1}+m_{2}}}\sum\limits_{n_{1}=0}^{2^{m_{1}}-1}\sum%
\limits_{n_{2}=0}^{2^{m_{2}}-1}\left\vert f\ast \left( D_{n_{1}}\otimes
D_{n_{2}}\right) \right\vert ^{2}\right\} ^{1/2}\lesssim
V_{m_{1}m_{2}}\left( x_{1},x_{2};\left\vert f\right\vert \right) .
\end{equation*}
\end{lemma}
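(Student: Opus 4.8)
The plan is to exploit the tensor structure of the double Walsh system together with Schipp's one--dimensional estimate (Theorem Sch2) applied in each variable. Two preliminary reductions make the statement transparent. Since $D_{2^{m}}=2^{m}\mathbb{I}_{I_{m}}$, the operator $S_{2^{m}}$ is the positive dyadic averaging $S_{2^{m}}g(x)=2^{m}\int_{I_{m}(x)}g$, and $f\ast(D_{n_{1}}\otimes D_{n_{2}})=S_{n_{1}n_{2}}(x_{1},x_{2};f)=S_{n_{1}}^{(1)}S_{n_{2}}^{(2)}f$, the two one--variable operators commuting. Moreover, writing each one--dimensional Schipp operator as $V_{n}(x;g)=\bigl(\sum_{l}(\int K_{l}^{n}(x,s)g(s)\,ds)^{2}\bigr)^{1/2}$ with the positive kernel $K_{l}^{n}$ read off from its definition, Fubini shows that the integrand defining $V_{m_{1}m_{2}}$ factors: the double average $S_{2^{m_{1}}2^{m_{2}}}$ separates, the $(t_{1},j_{1})$-- and $(t_{2},j_{2})$--parts decouple, and one obtains $V_{m_{1}m_{2}}(x_{1},x_{2};|f|)^{2}=\sum_{l_{1},l_{2}}\bigl(\int\!\!\int K_{l_{1}}^{m_{1}}(x_{1},s_{1})K_{l_{2}}^{m_{2}}(x_{2},s_{2})|f|\,ds_{1}ds_{2}\bigr)^{2}$, a single $\ell^{2}$ taken \emph{jointly} over $(l_{1},l_{2})$.

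With this the natural route is to iterate Theorem Sch2. Freezing $x_{1}$ and $n_{1}$ and regarding $S_{n_{1}n_{2}}f$ as the variable-$2$ partial sum of $S_{n_{1}}^{(1)}f$, Theorem Sch2 gives $\bigl(2^{-m_{2}}\sum_{n_{2}}|S_{n_{1}n_{2}}f|^{2}\bigr)^{1/2}\lesssim V_{m_{2}}^{(2)}(x_{1},x_{2};|S_{n_{1}}^{(1)}f|)$. Averaging over $n_{1}$ and then invoking Theorem Sch2 in the first variable---pointwise in the frozen second variable, the positivity of $S_{2^{m_{1}}}$ letting the absolute value pass through---reduces the whole left--hand side to the one--dimensional operators $V^{(1)},V^{(2)}$ recorded above.

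The delicate point, which I expect to be the crux, is the final bookkeeping. Performed as two successive black--box applications, the iteration lands on the nested iterate $V_{m_{1}}^{(1)}\bigl(x_{1};V_{m_{2}}^{(2)}(x_{2};|f|)\bigr)$, in which the $\ell^{2}$ over $l_{2}$ sits inside the $s_{1}$--integral before the $\ell^{2}$ over $l_{1}$ is taken; by Minkowski's inequality $\ell^{2}L^{1}\le L^{1}\ell^{2}$ this nested quantity dominates, but is in general strictly larger than, the joint operator $V_{m_{1}m_{2}}$ demanded on the right. Hence the two one--dimensional estimates cannot simply be composed; they must be coupled. The way I would resolve this is to repeat Schipp's one--dimensional argument directly in two parameters: use his pointwise representation of $S_{m}(\cdot;g)$ through the shifted dyadic averages $S_{2^{m}}(\cdot\dotplus t\dotplus e_{j};g)$ simultaneously in both variables, so that $S_{n_{1}n_{2}}f$ is expressed as a double aggregate of shifted $S_{2^{m_{1}}2^{m_{2}}}$--averages, and then apply a single Cauchy--Schwarz over the pair $(n_{1},n_{2})$. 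This keeps the indices $l_{1}$ and $l_{2}$ coupled and produces precisely the joint $\ell^{2}_{l_{1},l_{2}}$ of the factored formula for $V_{m_{1}m_{2}}$, while positivity of $S_{2^{m_{1}}}$ and $S_{2^{m_{2}}}$ again legitimizes the passage to $|f|$ at each step.
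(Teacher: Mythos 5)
Your plan is essentially the paper's proof: the paper tensorizes Schipp's three-term kernel identity for $D_m$ ($m<2^{n}$) in both variables (producing nine cross terms $R_1,\dots,R_9$) and bounds each by a single duality/Cauchy--Schwarz over the pair $(n_1,n_2)$, which keeps the indices $(l_1,l_2)$ coupled and yields exactly the joint operator $V_{m_1m_2}(x_1,x_2;|f|)$. Your diagnosis that two successive black-box applications of Theorem Sch2 only reach the larger nested iterate $V^{(1)}(V^{(2)}(|f|))$ is also correct --- that domination is precisely the content of Lemma~\ref{iteration} --- and what your sketch leaves out is only the routine bookkeeping of the cross terms coming from the $-\tfrac12 w_m(t)$ and $(m+1/2)\mathbb{I}_{I_n}(t)$ pieces of the kernel identity.
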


\begin{proof}[Proof of Lemma \protect\ref{2-estimation}]
Let%
\begin{equation*}
\varepsilon _{ji}:=\left\{ 
\begin{array}{l}
-1,\text{ if }j=0,1,...,i-1 \\ 
1,\text{ if }j=i%
\end{array}%
\right. .
\end{equation*}%
In \cite{Sch}, Schipp proved that%
\begin{eqnarray*}
D_{m}\left( t\right) &=&\sum\limits_{k=0}^{n-1}\mathbb{I}_{I_{k}\backslash
I_{k+1}}\left( t\right) \sum\limits_{j=0}^{k}\varepsilon
_{kj}2^{j-1}w_{m}\left( t+e_{j}\right) \\
&&-\frac{1}{2}w_{m}\left( t\right) +\left( m+1/2\right) \mathbb{I}%
_{I_{n}}\left( t\right) ,\text{ \ \ }m<2^{n}.
\end{eqnarray*}

Then we can write%
\begin{equation}
H_{m_{1}m_{2}}\left( x_{1},x_{2};f\right)  \label{R1-R9}
\end{equation}%
\begin{equation*}
:=\left\{ \frac{1}{2^{m_{1}+m_{2}}}\sum\limits_{n_{1}=0}^{2^{m_{1}}-1}\sum%
\limits_{n_{2}=0}^{2^{m_{2}}-1}\left\vert S_{2^{m_{1}}2^{m_{2}}}\left(
f\right) \ast \left( D_{n_{1}}\otimes D_{n_{2}}\right) \right\vert
^{2}\right\} ^{1/2}
\end{equation*}%
\begin{equation*}
\leq \left\{ \frac{1}{2^{m_{1}+m_{2}}}\sum\limits_{n_{1}=0}^{2^{m_{1}}-1}%
\sum\limits_{n_{2}=0}^{2^{m_{2}}-1}\left\vert \int\limits_{\mathbb{I}%
^{2}}S_{2^{m_{1}}2^{m_{2}}}\left( x_{1}\dotplus t_{1},x_{2}\dotplus
t_{2};f\right) \sum\limits_{k_{1}=0}^{m_{1}-1}\mathbb{I}_{I_{k_{1}}%
\backslash I_{k_{1}+1}}\left( t_{1}\right) \right. \right.
\end{equation*}%
\begin{equation*}
\times \sum\limits_{j_{1}=0}^{k_{1}}\varepsilon
_{k_{1}j_{1}}2^{j_{1}-1}w_{n_{1}}\left( t_{1}\dotplus e_{j_{1}}\right)
\sum\limits_{k_{2}=0}^{m_{2}-1}\mathbb{I}_{I_{k_{2}}\backslash
I_{k_{2}+1}}\left( t_{2}\right)
\end{equation*}%
\begin{equation*}
\left. \left. \times \sum\limits_{j_{2}=0}^{k_{2}}\varepsilon
_{k_{2}j_{2}}2^{j_{2}-1}w_{n_{2}}\left( t_{2}\dotplus e_{j_{2}}\right)
dt_{1}dt_{2}\right\vert ^{2}\right\} ^{1/2}
\end{equation*}%
\begin{equation*}
+\left\{ \frac{1}{2^{m_{1}+m_{2}}}\sum\limits_{n_{1}=0}^{2^{m_{1}}-1}\sum%
\limits_{n_{2}=0}^{2^{m_{2}}-1}\left\vert \int\limits_{\mathbb{I}%
^{2}}S_{2^{m_{1}}2^{m_{2}}}\left( x_{1}\dotplus t_{1},x_{2}\dotplus
t_{2};f\right) \sum\limits_{k_{1}=0}^{m_{1}-1}\mathbb{I}_{I_{k_{1}}%
\backslash I_{k_{1}+1}}\left( t_{1}\right) \right. \right.
\end{equation*}%
\begin{equation*}
\left. \left. \times \sum\limits_{j_{1}=0}^{k_{1}}\varepsilon
_{k_{1}j_{1}}2^{j_{1}-1}w_{n_{1}}\left( t_{1}\dotplus e_{j_{1}}\right) \frac{%
w_{n_{2}}\left( t_{2}\right) }{2}dt_{1}dt_{2}\right\vert ^{2}\right\} ^{1/2}
\end{equation*}%
\begin{equation*}
+\left\{ \frac{1}{2^{m_{1}+m_{2}}}\sum\limits_{n_{1}=0}^{2^{m_{1}}-1}\sum%
\limits_{n_{2}=0}^{2^{m_{2}}-1}\left\vert \int\limits_{\mathbb{I}%
^{2}}S_{2^{m_{1}}2^{m_{2}}}\left( x_{1}\dotplus t_{1},x_{2}\dotplus
t_{2};f\right) \sum\limits_{k_{1}=0}^{m_{1}-1}\mathbb{I}_{I_{k_{1}}%
\backslash I_{k_{1}+1}}\left( t_{1}\right) \right. \right.
\end{equation*}%
\begin{equation*}
\left. \left. \times \sum\limits_{j_{1}=0}^{k_{1}}\varepsilon
_{k_{1}j_{1}}2^{j_{1}-1}w_{n_{1}}\left( t_{1}\dotplus e_{j_{1}}\right)
\left( n_{2}+1/2\right) \mathbb{I}_{I_{m_{2}}}\left( t_{2}\right)
dt_{1}dt_{2}\right\vert ^{2}\right\} ^{1/2}
\end{equation*}%
\begin{equation*}
+\left\{ \frac{1}{2^{m_{1}+m_{2}}}\sum\limits_{n_{1}=0}^{2^{m_{1}}-1}\sum%
\limits_{n_{2}=0}^{2^{m_{2}}-1}\left\vert \int\limits_{\mathbb{I}%
^{2}}S_{2^{m_{1}}2^{m_{2}}}\left( x_{1}\dotplus t_{1},x_{2}\dotplus
t_{2};f\right) \sum\limits_{k_{2}=0}^{m_{2}-1}\mathbb{I}_{I_{k_{2}}%
\backslash I_{k_{2}+1}}\left( t_{2}\right) \right. \right.
\end{equation*}%
\begin{equation*}
\left. \left. \times \sum\limits_{j_{2}=0}^{k_{2}}\varepsilon
_{k_{2}j_{2}}2^{j_{2}-1}w_{n_{2}}\left( t_{2}\dotplus e_{j_{2}}\right) \frac{%
w_{n_{1}}\left( t_{1}\right) }{2}dt_{1}dt_{2}\right\vert ^{2}\right\} ^{1/2}
\end{equation*}%
\begin{equation*}
+\left\{ \frac{1}{2^{m_{1}+m_{2}}}\sum\limits_{n_{1}=0}^{2^{m_{1}}-1}\sum%
\limits_{n_{2}=0}^{2^{m_{2}}-1}\int\limits_{\mathbb{I}%
^{2}}S_{2^{m_{1}}2^{m_{2}}}\left( x_{1}\dotplus t_{1},x_{2}\dotplus
t_{2};f\right) \right.
\end{equation*}%
\begin{equation*}
\left. \left. \times \frac{w_{n_{1}}\left( t_{1}\right) }{2}\frac{%
w_{n_{2}}\left( t_{2}\right) }{2}dt_{1}dt_{2}\right\vert ^{2}\right\} ^{1/2}
\end{equation*}%
\begin{equation*}
+\left\{ \frac{1}{2^{m_{1}+m_{2}}}\sum\limits_{n_{1}=0}^{2^{m_{1}}-1}\sum%
\limits_{n_{2}=0}^{2^{m_{2}}-1}\left\vert \int\limits_{\mathbb{I}%
^{2}}S_{2^{m_{1}}2^{m_{2}}}\left( x_{1}\dotplus t_{1},x_{2}\dotplus
t_{2};f\right) \right. \right.
\end{equation*}%
\begin{equation*}
\left. \left. \times \frac{w_{n_{1}}\left( t_{1}\right) }{2}\left(
n_{2}+1/2\right) \mathbb{I}_{I_{m_{2}}}\left( t_{2}\right)
dt_{1}dt_{2}\right\vert ^{2}\right\} ^{1/2}
\end{equation*}%
\begin{equation*}
+\left\{ \frac{1}{2^{m_{1}+m_{2}}}\sum\limits_{n_{1}=0}^{2^{m_{1}}-1}\sum%
\limits_{n_{2}=0}^{2^{m_{2}}-1}\left\vert \int\limits_{\mathbb{I}%
^{2}}S_{2^{m_{1}}2^{m_{2}}}\left( x_{1}\dotplus t_{1},x_{2}\dotplus
t_{2};f\right) \sum\limits_{k_{2}=0}^{m_{2}-1}\mathbb{I}_{I_{k_{2}}%
\backslash I_{k_{2}+1}}\left( t_{2}\right) \right. \right.
\end{equation*}%
\begin{equation*}
\left. \left. \times \sum\limits_{j_{2}=0}^{k_{2}}\varepsilon
_{k_{2}j_{2}}2^{j_{2}-1}w_{n_{2}}\left( t_{2}\dotplus e_{j_{2}}\right)
\left( n_{1}+1/2\right) \mathbb{I}_{I_{m_{1}}}\left( t_{1}\right)
dt_{1}dt_{2}\right\vert ^{2}\right\} ^{1/2}
\end{equation*}%
\begin{equation*}
+\left\{ \frac{1}{2^{m_{1}+m_{2}}}\sum\limits_{n_{1}=0}^{2^{m_{1}}-1}\sum%
\limits_{n_{2}=0}^{2^{m_{2}}-1}\left\vert \int\limits_{\mathbb{I}%
^{2}}S_{2^{m_{1}}2^{m_{2}}}\left( x_{1}+t_{1},x_{2}+t_{2};f\right) \right.
\right.
\end{equation*}%
\begin{equation*}
\left. \left. \times \frac{w_{n_{2}}\left( t_{2}\right) }{2}\left(
n_{1}+1/2\right) \mathbb{I}_{I_{m_{1}}}\left( t_{1}\right)
dt_{1}dt_{2}\right\vert ^{2}\right\} ^{1/2}
\end{equation*}%
\begin{equation*}
+\left\{ \frac{1}{2^{m_{1}+m_{2}}}\sum\limits_{n_{1}=0}^{2^{m_{1}}-1}\sum%
\limits_{n_{2}=0}^{2^{m_{2}}-1}\left\vert \int\limits_{\mathbb{I}%
^{2}}S_{2^{m_{1}}2^{m_{2}}}\left( x_{1}\dotplus t_{1},x_{2}\dotplus
t_{2};f\right) \right. \right.
\end{equation*}%
\begin{equation*}
\left. \left. \times \left( n_{1}+1/2\right) \mathbb{I}_{I_{m_{1}}}\left(
t_{1}\right) \left( n_{2}+1/2\right) \mathbb{I}_{I_{m_{2}}}\left(
t_{2}\right) dt_{1}dt_{2}\right\vert ^{2}\right\} ^{1/2}
\end{equation*}%
\begin{equation*}
:=\sum\limits_{i=1}^{9}R_{i}.
\end{equation*}

There is a suitable vector%
\begin{equation*}
\left\{ \beta _{n_{1}n_{2}}^{\left( 1\right) }\left( x_{1},x_{2}\right)
:0\leq n_{1}<2^{m_{1}},0\leq n_{2}<2^{m_{2}}\right\}
\end{equation*}%
such that%
\begin{equation*}
\sum\limits_{n_{1}=0}^{2^{m_{1}}-1}\sum\limits_{n_{2}=0}^{2^{m_{2}}-1}\left%
\vert \beta _{n_{1}n_{2}}^{\left( 1\right) }\left( x_{1},x_{2}\right)
\right\vert ^{2}=1
\end{equation*}%
and%
\begin{equation}
2^{\left( m_{1}+m_{2}\right) /2}R_{1}  \label{R1}
\end{equation}%
\begin{equation*}
=\int\limits_{\mathbb{I}^{2}}\sum\limits_{j_{1}=0}^{m_{1}-1}2^{j_{1}-1}%
\left( \sum\limits_{k_{1}=j_{1}}^{m_{1}-1}\varepsilon _{k_{1}j_{1}}\mathbb{I}%
_{I_{k_{1}}\backslash I_{k_{1}+1}}\left( t_{1}\dotplus e_{j_{1}}\right)
\right)
\end{equation*}%
\begin{equation*}
\times \sum\limits_{j_{2}=0}^{m_{2}-1}2^{j_{1}-1}\left(
\sum\limits_{k_{2}=j_{2}}^{m_{2}-1}\varepsilon _{k_{2}j_{2}}\mathbb{I}%
_{I_{k_{2}}\backslash I_{k_{2}+1}}\left( t_{1}\dotplus e_{j_{2}}\right)
\right)
\end{equation*}%
\begin{equation*}
\times S_{2^{m_{1}}2^{m_{2}}}\left( x_{1}\dotplus t_{1}\dotplus
e_{j_{1}},x_{2}\dotplus t_{2}\dotplus e_{j_{2}};f\right)
\end{equation*}%
\begin{equation*}
\times
\sum\limits_{n_{1}=0}^{2^{m_{1}}-1}\sum\limits_{n_{2}=0}^{2^{m_{2}}-1}\beta
_{n_{1}n_{2}}^{\left( 1\right) }\left( x_{1},x_{2}\right) w_{n_{1}}\left(
t_{1}\right) w_{n_{2}}\left( t_{2}\right) dt_{1}dt_{2}
\end{equation*}%
\begin{equation*}
\leq \int\limits_{\mathbb{I}^{2}}\sum\limits_{j_{1}=0}^{m_{1}-1}2^{j_{1}-1}%
\mathbb{I}_{I_{j_{1}}}\left( t_{1}\right)
\sum\limits_{j_{2}=0}^{m_{2}-1}2^{j_{2}-1}\mathbb{I}_{I_{j_{2}}}\left(
t_{2}\right)
\end{equation*}%
\begin{equation*}
\times S_{2^{m_{1}}2^{m_{2}}}\left(
x_{1}+t_{1}+e_{j_{1}},x_{2}+t_{2}+e_{j_{2}};\left\vert f\right\vert \right)
\end{equation*}%
\begin{equation*}
\times \left\vert
\sum\limits_{n_{1}=0}^{2^{m_{1}}-1}\sum\limits_{n_{2}=0}^{2^{m_{2}}-1}\beta
_{n_{1}n_{2}}^{\left( 1\right) }\left( x_{1},x_{2}\right) w_{n_{1}}\left(
t_{1}\right) w_{n_{2}}\left( t_{2}\right) \right\vert dt_{1}dt_{2}.
\end{equation*}

Analogously, we can prove that 
\begin{equation}
2^{\left( m_{1}+m_{2}\right) /2}R_{2}\lesssim \int\limits_{\mathbb{I}%
^{2}}\sum\limits_{j_{1}=0}^{m_{1}-1}2^{j_{1}-1}\mathbb{I}_{I_{j_{1}}}\left(
t_{1}\right)  \label{R2}
\end{equation}%
\begin{equation*}
\times S_{2^{m_{1}}2^{m_{2}}}\left( x_{1}\dotplus t_{1}\dotplus
e_{j_{1}},x_{2}\dotplus t_{2}\dotplus e_{0};\left\vert f\right\vert \right)
\end{equation*}%
\begin{equation*}
\times \left\vert
\sum\limits_{n_{1}=0}^{2^{m_{1}}-1}\sum\limits_{n_{2}=0}^{2^{m_{2}}-1}\beta
_{n_{1}n_{2}}^{\left( 2\right) }\left( x_{1},x_{2}\right) w_{n_{1}}\left(
t_{1}\right) w_{n_{2}}\left( e_{0}\right) w_{n_{2}}\left( t_{2}\right)
\right\vert dt_{1}dt_{2},
\end{equation*}

\begin{equation}
2^{\left( m_{1}+m_{2}\right) /2}R_{4}  \label{R4}
\end{equation}%
\begin{equation*}
\lesssim \int\limits_{\mathbb{I}^{2}}\sum%
\limits_{j_{2}=0}^{m_{2}-1}2^{j_{2}-1}\mathbb{I}_{I_{j_{2}}}\left(
t_{2}\right) S_{2^{m_{1}}2^{m_{2}}}\left(
x_{1}+t_{1}+e_{0},x_{2}+t_{2}+e_{j_{2}};\left\vert f\right\vert \right)
\end{equation*}%
\begin{equation*}
\times \left\vert
\sum\limits_{n_{1}=0}^{2^{m_{1}}-1}\sum\limits_{n_{2}=0}^{2^{m_{2}}-1}\beta
_{n_{1}n_{2}}^{\left( 4\right) }\left( x_{1},x_{2}\right) w_{n_{1}}\left(
t_{1}\right) w_{n_{1}}\left( e_{0}\right) w_{n_{2}}\left( t_{2}\right)
\right\vert dt_{1}dt_{2}.
\end{equation*}%
\begin{equation}
2^{\left( m_{1}+m_{2}\right) /2}R_{5}  \label{R5}
\end{equation}%
\begin{equation*}
\lesssim \int\limits_{\mathbb{I}^{2}}S_{2^{m_{1}}2^{m_{2}}}\left(
x_{1}\dotplus t_{1}\dotplus e_{0},x_{2}\dotplus t_{2}\dotplus
e_{0};\left\vert f\right\vert \right)
\end{equation*}%
\begin{equation*}
\times \left\vert
\sum\limits_{n_{1}=0}^{2^{m_{1}}-1}\sum\limits_{n_{2}=0}^{2^{m_{2}}-1}\beta
_{n_{1}n_{2}}^{\left( 5\right) }\left( x_{1},x_{2}\right) w_{n_{1}}\left(
t_{1}\right) w_{n_{1}}\left( e_{0}\right) w_{n_{2}}\left( t_{2}\right)
w_{n_{2}}\left( e_{0}\right) \right\vert dt_{1}dt_{2},
\end{equation*}%
where%
\begin{equation*}
\sum\limits_{n_{1}=0}^{2^{m_{1}}-1}\left\vert \beta _{n_{1}}^{\left(
s\right) }\left( x_{1},x_{2}\right) \right\vert ^{2}=1\text{ }\left(
x_{1},x_{2}\right) \in \mathbb{I}^{2},s=2,4,5.
\end{equation*}

Now, we estimate $R_{3}$. There is a suitable vector%
\begin{equation*}
\left\{ \beta _{n_{1}}^{\left( 3\right) }\left( x_{1},x_{2}\right) :0\leq
n_{1}<2^{m_{1}}\right\}
\end{equation*}%
such that%
\begin{equation*}
\sum\limits_{n_{1}=0}^{2^{m_{1}}-1}\left\vert \beta _{n_{1}}^{\left(
3\right) }\left( x_{1},x_{2}\right) \right\vert ^{2}=1,\text{ }\left(
x_{1},x_{2}\right) \in \mathbb{I}^{2}
\end{equation*}%
and%
\begin{equation}
2^{\left( m_{1}+m_{2}\right) /2}R_{3}  \label{R3}
\end{equation}%
\begin{equation*}
\leq c2^{3m_{2}/2}\int\limits_{I\times
I_{m_{2}}}\sum\limits_{j_{1}=0}^{m_{1}-1}2^{j_{1}-1}\mathbb{I}%
_{I_{j_{1}}}\left( t_{1}\right) S_{2^{m_{1}}2^{m_{2}}}\left( x_{1}\dotplus
t_{1}\dotplus e_{j_{1}},x_{2}\dotplus t_{2};\left\vert f\right\vert \right)
\end{equation*}%
\begin{equation*}
\times \left\vert \sum\limits_{n_{1}=0}^{2^{m_{1}}-1}\beta _{n_{1}}^{\left(
3\right) }\left( x_{1},x_{2}\right) w_{n_{1}}\left( t_{1}\right) \right\vert
dt_{1}dt_{2}.
\end{equation*}

Analogously, we can prove that%
\begin{equation}
2^{\left( m_{1}+m_{2}\right) /2}R_{6}\lesssim 2^{\left( 3/2\right)
m_{2}}\int\limits_{\mathbb{I}\times I_{m_{2}}}S_{2^{m_{1}}2^{m_{2}}}\left(
x_{1}+t_{1}+e_{0},x_{2}+t_{2};\left\vert f\right\vert \right)  \label{R6}
\end{equation}%
\begin{equation*}
\times \left\vert \sum\limits_{n_{1}=0}^{2^{m_{1}}-1}\beta _{n_{1}}^{\left(
6\right) }\left( x_{1},x_{2}\right) w_{n_{1}}\left( e_{0}\right)
w_{n_{1}}\left( t_{1}\right) \right\vert dt_{1}dt_{2},
\end{equation*}%
\begin{equation}
2^{\left( m_{1}+m_{2}\right) /2}R_{7}\lesssim 2^{\left( 3/2\right)
m_{1}}\int\limits_{I_{m_{1}}\times \mathbb{I}}\sum%
\limits_{j_{2}=0}^{m_{2}-1}2^{j_{2}-1}\mathbb{I}_{I_{j_{2}}}\left(
t_{2}\right)  \label{R7}
\end{equation}%
\begin{equation*}
\times S_{2^{m_{1}}2^{m_{2}}}\left(
x_{1}+t_{1},x_{2}+t_{2}+e_{j_{2}};\left\vert f\right\vert \right)
\end{equation*}%
\begin{equation*}
\times \left\vert \sum\limits_{n_{1}=0}^{2^{m_{1}}-1}\beta _{n_{1}}^{\left(
7\right) }\left( x_{1},x_{2}\right) w_{n_{2}}\left( t_{2}\right) \right\vert
dt_{1}dt_{2},
\end{equation*}%
\begin{equation}
2^{\left( m_{1}+m_{2}\right) /2}R_{8}\lesssim 2^{\left( 3/2\right)
m_{1}}\int\limits_{I_{m_{1}}\times \mathbb{I}}S_{2^{m_{1}}2^{m_{2}}}\left(
x_{1}\dotplus t_{1},x_{2}\dotplus t_{2}\dotplus e_{0};\left\vert
f\right\vert \right)  \label{R8}
\end{equation}%
\begin{equation*}
\times \left\vert \sum\limits_{n_{2}=0}^{2^{m_{2}}-1}\beta _{n_{2}}^{\left(
8\right) }\left( x_{1},x_{2}\right) w_{n_{2}}\left( e_{0}\right)
w_{n_{2}}\left( t_{2}\right) \right\vert dt_{1}dt_{2},
\end{equation*}%
\begin{equation}
2^{\left( m_{1}+m_{2}\right) /2}R_{9}\lesssim 2^{\left( 3/2\right) \left(
m_{1}+m_{2}\right) }\int\limits_{I_{m_{1}}\times
I_{m_{2}}}S_{2^{m_{1}}2^{m_{2}}}\left( x_{1}\dotplus t_{1},x_{2}\dotplus
t_{2};\left\vert f\right\vert \right) ,  \label{R9}
\end{equation}%
where%
\begin{equation*}
\sum\limits_{n_{1}=0}^{2^{m_{1}}-1}\left\vert \beta _{n_{1}}^{\left(
s\right) }\left( x_{1},x_{2}\right) \right\vert ^{2}=1\text{ }\left(
x_{1},x_{2}\right) \in \mathbb{I}^{2},s=6,7
\end{equation*}%
and%
\begin{equation*}
\sum\limits_{n_{2}=0}^{2^{m_{2}}-1}\left\vert \beta _{n_{2}}^{\left(
8\right) }\left( x_{1},x_{2}\right) \right\vert ^{2}=1\text{ }\left(
x_{1},x_{2}\right) \in \mathbb{I}^{2}.
\end{equation*}

Set%
\begin{equation*}
P_{m_{1}m_{2}}^{\left( 1\right) }\left( x_{1},x_{2}\right)
:=\sum\limits_{n_{1}=0}^{2^{m_{1}}-1}\sum\limits_{n_{2}=0}^{2^{m_{2}}-1}%
\beta _{n_{1}n_{2}}^{\left( 1\right) }\left( x_{1},x_{2}\right)
w_{n_{1}}\left( t_{1}\right) w_{n_{2}}\left( t_{2}\right) .
\end{equation*}%
Then from (\ref{R1}) we have%
\begin{equation*}
2^{\left( m_{1}+m_{2}\right) /2}R_{1}
\end{equation*}%
\begin{equation*}
=\sum\limits_{l_{1}=0}^{2^{m_{1}}-1}\sum\limits_{l_{2}=0}^{2^{m_{2}}-1}\left%
\vert P_{m_{1}m_{2}}^{\left( 1\right) }\left( \frac{l_{1}}{2^{m_{1}}},\frac{%
l_{2}}{2^{m_{2}}}\right) \right\vert \int\limits_{l_{1}2^{-m_{1}}}^{\left(
l_{1}+1\right) 2^{-m_{1}}}\int\limits_{l_{2}2^{-m_{2}}}^{\left(
l_{2}+1\right) 2^{-m_{2}}}\sum\limits_{j_{1}=0}^{m_{1}-1}2^{j_{1}-1}\mathbb{I%
}_{I_{j_{1}}}\left( t_{1}\right)
\end{equation*}%
\begin{equation*}
\times \sum\limits_{j_{2}=0}^{m_{2}-1}2^{j_{2}-1}\mathbb{I}%
_{I_{j_{2}}}\left( t_{2}\right) S_{2^{m_{1}}2^{m_{2}}}\left( x_{1}\dotplus
t_{1}\dotplus e_{j_{1}},x_{2}\dotplus t_{2}\dotplus e_{j_{2}};\left\vert
f\right\vert \right) dt_{1}dt_{2}
\end{equation*}

\begin{equation*}
\leq \left(
\sum\limits_{l_{1}=0}^{2^{m_{1}}-1}\sum\limits_{l_{2}=0}^{2^{m_{2}}-1}\left%
\vert P_{m_{1}m_{2}}^{\left( 1\right) }\left( \frac{l_{1}}{2^{m_{1}}},\frac{%
l_{2}}{2^{m_{2}}}\right) \right\vert ^{2}\right) ^{1/2}
\end{equation*}%
\begin{equation*}
\times \left(
\sum\limits_{l_{1}=0}^{2^{m_{1}}-1}\sum\limits_{l_{2}=0}^{2^{m_{2}}-1}\left(
\int\limits_{l_{1}2^{-m_{1}}}^{\left( l_{1}+1\right)
2^{-m_{1}}}\int\limits_{l_{2}2^{-m_{2}}}^{\left( l_{2}+1\right)
2^{-m_{2}}}\sum\limits_{j_{1}=0}^{m_{1}-1}2^{j_{1}-1}\mathbb{I}%
_{I_{j_{1}}}\left( t_{1}\right) \sum\limits_{j_{2}=0}^{m_{2}-1}2^{j_{2}-1}%
\mathbb{I}_{I_{j_{2}}}\left( t_{2}\right) \right. \right.
\end{equation*}%
\begin{equation*}
\left. \left. \times S_{2^{m_{1}}2^{m_{2}}}\left( x_{1}\dotplus
t_{1}\dotplus e_{j_{1}},x_{2}\dotplus t_{2}\dotplus e_{j_{2}};\left\vert
f\right\vert \right) dt_{1}dt_{2}\right) ^{2}\right) ^{1/2}.
\end{equation*}%
Since%
\begin{equation*}
\sum\limits_{l_{1}=0}^{2^{m_{1}}-1}\sum\limits_{l_{2}=0}^{2^{m_{2}}-1}\left%
\vert P_{m_{1}m_{2}}^{\left( 1\right) }\left( \frac{l_{1}}{2^{m_{1}}},\frac{%
l_{2}}{2^{m_{2}}}\right) \right\vert ^{2}
\end{equation*}%
\begin{equation*}
=2^{m_{1}+m_{2}}\sum\limits_{l_{1}=0}^{2^{m_{1}}-1}\sum%
\limits_{l_{2}=0}^{2^{m_{2}}-1}\int\limits_{l_{1}2^{-m_{1}}}^{\left(
l_{1}+1\right) 2^{-m_{1}}}\int\limits_{l_{2}2^{-m_{2}}}^{\left(
l_{2}+1\right) 2^{-m_{2}}}\left\vert P_{m_{1}m_{2}}^{\left( 1\right) }\left(
t_{1},t_{2}\right) \right\vert ^{2}dt_{1}dt_{2}
\end{equation*}%
\begin{equation*}
2^{m_{1}+m_{2}}\int\limits_{\mathbb{I}^{2}}\left\vert P_{m_{1}m_{2}}^{\left(
1\right) }\left( t_{1},t_{2}\right) \right\vert ^{2}dt_{1}dt_{2}\leq
2^{m_{1}+m_{2}}
\end{equation*}%
we have%
\begin{equation}
R_{1}\lesssim V_{m_{1}m_{2}}\left( x_{1},x_{2};\left\vert f\right\vert
\right)  \label{R1-2}
\end{equation}

Analogously, from (\ref{R2})-(\ref{R9}) we can prove that ($s=2,...,9$)%
\begin{equation}
R_{s}\lesssim V_{m_{1}m_{2}}\left( x_{1},x_{2};\left\vert f\right\vert
\right) .  \label{R2-R9}
\end{equation}

Combining (\ref{R1-R9}), (\ref{R1-2}) and (\ref{R2-R9}) we conclude the
proof of Theorem \ref{2-estimation}.
\end{proof}

\begin{lemma}
\label{iteration}The following estimation holds%
\begin{equation*}
V_{m_{1}m_{2}}\left( x_{1},x_{2};\left\vert f\right\vert \right) \lesssim
V^{\left( 1\right) }\left( x_{1},x_{2};V^{\left( 2\right) }\left( \left\vert
f\right\vert \right) \right) .
\end{equation*}
\end{lemma}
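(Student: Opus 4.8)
The plan is to peel the two variables off one at a time, exploiting the tensor-product form of the kernel in $V_{m_1m_2}$ and applying Minkowski's inequality for the $\ell^2$-norm twice. The decisive preliminary observation is that, since $D_{2^n}=2^n\mathbb I_{I_n}$, the square partial sum $S_{2^{m_1}2^{m_2}}(\cdot,\cdot;|f|)$ is a rectangular dyadic average of $|f|$, whence
\[
S_{2^{m_1}2^{m_2}}\bigl(u_1,v_2;|f|\bigr)=2^{m_1}\int_{I_{m_1}(u_1)}S_{2^{m_2}}^{(2)}\bigl(s_1,v_2;|f|\bigr)\,ds_1 ,
\]
with $S_{2^{m_2}}^{(2)}$ acting in the second variable only. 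Throughout I use that $|f|\ge 0$, so that every quantity below is nonnegative and both the $\ell^2$-norms and the dyadic averages $2^{m}\int_{I_m(\cdot)}$ are monotone.

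Write the $(l_1,l_2)$-cell contribution to $V_{m_1m_2}$ as $A_{l_1l_2}$, isolate its second-variable part
\[
C_{l_2}(u_1):=\int_{l_22^{-m_2}}^{(l_2+1)2^{-m_2}}\sum_{j_2=0}^{m_2-1}2^{j_2-1}\mathbb I_{I_{j_2}}(t_2)\,S_{2^{m_1}2^{m_2}}\bigl(u_1,x_2\dotplus t_2\dotplus e_{j_2};|f|\bigr)\,dt_2 ,
\]
and set $g:=V^{(2)}(|f|)$. Substituting the factorization into $C_{l_2}$ and using Fubini to pull the average $2^{m_1}\int_{I_{m_1}(u_1)}ds_1$ outside, the integrand that remains is exactly the $l_2$-cell contribution $\tilde C_{l_2}(s_1)$ to $V_{m_2}^{(2)}(s_1,x_2;|f|)$. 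Minkowski's inequality in $\ell^2_{l_2}$ against this average, followed by $V_{m_2}^{(2)}(s_1,x_2;|f|)\le g(s_1,x_2)$ and one further averaging, gives
\[
\Bigl(\sum_{l_2}C_{l_2}(u_1)^2\Bigr)^{1/2}\le 2^{m_1}\int_{I_{m_1}(u_1)}g(s_1,x_2)\,ds_1=S_{2^{m_1}}^{(1)}(u_1,x_2;g).
\]

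For the outer (first-variable) step I write $A_{l_1l_2}=\int_{l_12^{-m_1}}^{(l_1+1)2^{-m_1}}\sum_{j_1}2^{j_1-1}\mathbb I_{I_{j_1}}(t_1)\,C_{l_2}(x_1\dotplus t_1\dotplus e_{j_1})\,dt_1$, apply Minkowski in $\ell^2_{l_2}$ to carry the norm inside the $t_1$-integral and $j_1$-sum, and insert the bound just obtained. The resulting expression is precisely the $l_1$-cell contribution to $V_{m_1}^{(1)}(x_1,x_2;g)$, so that
\[
V_{m_1m_2}(x_1,x_2;|f|)=\Bigl(\sum_{l_1}\sum_{l_2}A_{l_1l_2}^2\Bigr)^{1/2}\le V_{m_1}^{(1)}(x_1,x_2;g)\le V^{(1)}\bigl(x_1,x_2;V^{(2)}(|f|)\bigr),
\]
which is the assertion.

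I expect the main obstacle to be the factorization step: one must pass from the coupled operator $S_{2^{m_1}2^{m_2}}$ to the genuine $V^{(2)}(|f|)$ rather than to $V^{(2)}$ of a first-variable-smoothed function. The displayed identity is what makes this possible, since it exhibits $S_{2^{m_1}2^{m_2}}(\cdot,\cdot;|f|)$ as a variable-one average of the honest second-variable averages $S_{2^{m_2}}^{(2)}(\cdot,\cdot;|f|)$; combined with nonnegativity (so that $V_{m_2}^{(2)}\le V^{(2)}$ may be inserted under the average) this is exactly what lets the two Minkowski steps compose into the iterated operator $V^{(1)}(V^{(2)})$.
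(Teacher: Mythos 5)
Your proof is correct and follows essentially the same route as the paper's: both peel off the second variable first, identify the inner $\ell^2_{l_2}$ quantity with $V^{\left(2\right)}_{m_2}\le V^{\left(2\right)}\left(\left\vert f\right\vert\right)$ by combining the factorization $S_{2^{m_1}2^{m_2}}=S^{\left(1\right)}_{2^{m_1}}\circ S^{\left(2\right)}_{2^{m_2}}$ with positivity, and then repeat the argument in the first variable to produce $V^{\left(1\right)}$. The only difference is presentational: you use Minkowski's integral inequality in $\ell^2$ where the paper dualizes against a unit vector $\left\{ a_{l_1l_2}\right\}$ and applies Cauchy--Schwarz, and you make explicit the first-variable averaging identity that the paper's displayed chain leaves implicit.
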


\begin{proof}[Proof of Lemma \protect\ref{iteration}]
There is a suitable vector%
\begin{equation*}
\left\{ a_{l_{1}l_{2}}\left( x_{1},x_{2}\right) :0\leq l_{1}<2^{m_{1}},0\leq
l_{2}<2^{m_{2}}\right\}
\end{equation*}%
such that%
\begin{equation*}
\sum\limits_{l_{1}=0}^{2^{m_{1}}-1}\sum\limits_{l_{2}=0}^{2^{m_{2}}-1}\left%
\vert a_{l_{1}l_{2}}\left( x_{1},x_{2}\right) \right\vert ^{2}=1
\end{equation*}%
and%
\begin{equation*}
V_{m_{1}m_{2}}\left( x_{1},x_{2};\left\vert f\right\vert \right)
\end{equation*}%
\begin{equation*}
=\sum\limits_{l_{1}=0}^{2^{m_{1}}-1}\int\limits_{l_{1}2^{-m_{1}}}^{\left(
l_{1}+1\right) 2^{-m_{1}}}\sum\limits_{j_{1}=0}^{m_{1}-1}2^{j_{1}-1}\mathbb{I%
}_{I_{j_{1}}}\left( t_{1}\right)
\end{equation*}%
\begin{equation*}
\times \sum\limits_{l_{2}=0}^{2^{m_{2}}-1}a_{l_{1}l_{2}}\left(
x_{1},x_{2}\right) \left( \int\limits_{l_{2}2^{-m_{2}}}^{\left(
l_{2}+1\right) 2^{-m_{2}}}\sum\limits_{j_{2}=0}^{m_{2}-1}2^{j_{2}-1}\mathbb{I%
}_{I_{j_{2}}}\left( t_{2}\right) \right.
\end{equation*}%
\begin{equation*}
\left. \times S_{2^{m_{1}}2^{m_{2}}}\left(
x_{1}+t_{1}+e_{j_{1}},x_{2}+t_{2}+e_{j_{2}};\left\vert f\right\vert \right)
dt_{2}\right) dt_{1}
\end{equation*}%
\begin{equation*}
\leq
\sum\limits_{l_{1}=0}^{2^{m_{1}}-1}\int\limits_{l_{1}2^{-m_{1}}}^{\left(
l_{1}+1\right) 2^{-m_{1}}}\sum\limits_{j_{1}=0}^{m_{1}-1}2^{j_{1}-1}\mathbb{I%
}_{I_{j_{1}}}\left( t_{1}\right) \left(
\sum\limits_{l_{2}=0}^{2^{m_{2}}-1}\left\vert a_{l_{1}l_{2}}\left(
x_{1},x_{2}\right) \right\vert ^{2}\right)
^{1/2}\sum\limits_{l_{2}=0}^{2^{m_{2}}-1}
\end{equation*}%
\begin{equation*}
\left( \int\limits_{l_{2}2^{-m_{2}}}^{\left( l_{2}+1\right)
2^{-m_{2}}}\sum\limits_{j_{2}=0}^{m_{2}-1}2^{j_{2}-1}\mathbb{I}%
_{I_{j_{2}}}\left( t_{2}\right) \right.
\end{equation*}%
\begin{equation*}
\left. \left. \times S_{2^{m_{2}}}^{\left( 2\right) }\left( x_{1}\dotplus
t_{1}\dotplus e_{j_{1}},x_{2}\dotplus t_{2}\dotplus e_{j_{2}};\left\vert
f\right\vert \right) dt_{2}\right) ^{2}\right) ^{1/2}dt_{1}
\end{equation*}%
\begin{equation*}
\leq \sum\limits_{l_{1}=0}^{2^{m_{1}}-1}\left(
\sum\limits_{l_{2}=0}^{2^{m_{2}}-1}\left\vert a_{l_{1}l_{2}}\left(
x_{1},x_{2}\right) \right\vert ^{2}\right)
^{1/2}\int\limits_{l_{1}2^{-m_{1}}}^{\left( l_{1}+1\right)
2^{-m_{1}}}\sum\limits_{j_{1}=0}^{m_{1}-1}2^{j_{1}-1}\mathbb{I}%
_{I_{j_{1}}}\left( t_{1}\right)
\end{equation*}%
\begin{equation*}
\times V^{\left( 2\right) }\left( x_{1}+t_{1}+e_{j_{1}},x_{2};\left\vert
f\right\vert \right) dt_{1}
\end{equation*}%
\begin{equation*}
\leq \left(
\sum\limits_{l_{1}=0}^{2^{m_{1}}-1}\sum\limits_{l_{2}=0}^{2^{m_{2}}-1}\left%
\vert a_{l_{1}l_{2}}\left( x_{1},x_{2}\right) \right\vert ^{2}\right)
^{1/2}\left( \sum\limits_{l_{1}=0}^{2^{m_{1}}-1}\left(
\int\limits_{l_{1}2^{-m_{1}}}^{\left( l_{1}+1\right)
2^{-m_{1}}}\sum\limits_{j_{1}=0}^{m_{1}-1}2^{j_{1}-1}\mathbb{I}%
_{I_{j_{1}}}\left( t_{1}\right) \right. \right.
\end{equation*}%
\begin{equation*}
\left. \left. \times V^{\left( 2\right) }\left( x_{1}\dotplus t_{1}\dotplus
e_{j_{1}},x_{2};\left\vert f\right\vert \right) dt_{1}\right) ^{2}\right)
^{1/2}\lesssim V^{\left( 1\right) }\left( x_{1},x_{2};V^{\left( 2\right)
}\right) .
\end{equation*}

Lemma \ref{iteration} is proved.
\end{proof}

\section{Proof of Main Results}

\begin{proof}[Proof of Theorem \protect\ref{bmo}]
Set%
\begin{equation*}
f_{n_{1},n_{2}}\left( x_{1},x_{2},t_{1},t_{2}\right)
:=\sum\limits_{k_{1}=0}^{2^{n_{1}}-1}\sum%
\limits_{k_{2}=0}^{2^{n_{2}}-1}S_{k_{1},k_{2}}\left( x_{1},x_{2};f\right) 
\mathbb{I}_{\delta _{k_{1}}^{n_{1}}}\left( t_{1}\right) \mathbb{I}_{\delta
_{k_{2}}^{n_{2}}}\left( t_{2}\right) ,
\end{equation*}%
\begin{equation*}
J_{1}:=\left[ j_{1}2^{-m},\left( j_{1}+1\right) 2^{-m}\right) ,J_{2}:=\left[
j_{2}2^{-m},\left( j_{2}+1\right) 2^{-m}\right) .
\end{equation*}

Then we can write $\left( n_{1}\leq n_{2}\right) $%
\begin{equation}
\left\Vert f_{n_{1},n_{2}}\left( x_{1},x_{2},\cdot ,\cdot \right)
\right\Vert _{BMO}  \label{P1-P4}
\end{equation}%
\begin{equation*}
=\sup\limits_{m}\sup\limits_{0\leq j_{1},j_{2}<2^{m}}\left( \frac{1}{%
\left\vert J_{1}\times J_{2}\right\vert }\int\limits_{J_{1}\times
J_{2}}\left\vert f_{n_{1},n_{2}}\left( x_{1},x_{2},t_{1},t_{2}\right)
\right. \right.
\end{equation*}%
\begin{equation*}
\left. \left. -\frac{1}{\left\vert J_{1}\times J_{2}\right\vert }%
\int\limits_{J_{1}\times J_{2}}f_{n_{1},n_{2}}\left(
x_{1},x_{2},u_{1},u_{2}\right) du_{1}du_{2}\right\vert
^{2}dt_{1}dt_{2}\right) ^{1/2}
\end{equation*}%
\begin{equation*}
\leq \left( \sup\limits_{m\leq n_{1}}\sup\limits_{0\leq
j_{1},j_{2}<2^{m}}+\sup\limits_{n_{1}<m\leq n_{2}}\sup\limits_{0\leq
j_{1},j_{2}<2^{m}}+\sup\limits_{m>n_{2}}\sup\limits_{0\leq
j_{1},j_{2}<2^{m}}\right)
\end{equation*}%
\begin{equation*}
\left( \frac{1}{\left\vert J_{1}\times J_{2}\right\vert }\int\limits_{J_{1}%
\times J_{2}}\left\vert f_{n_{1},n_{2}}\left( x_{1},x_{2},t_{1},t_{2}\right)
\right. \right.
\end{equation*}%
\begin{equation*}
\left. \left. -\frac{1}{\left\vert J_{1}\times J_{2}\right\vert }%
\int\limits_{J_{1}\times J_{2}}f_{n_{1},n_{2}}\left(
x_{1},x_{2},u_{1},u_{2}\right) du_{1}du_{2}\right\vert
^{2}dt_{1}dt_{2}\right) ^{1/2}
\end{equation*}%
\begin{equation*}
:=P_{1}\left( n_{1},n_{2}\right) +P_{2}\left( n_{1},n_{2}\right)
+P_{3}\left( n_{1},n_{2}\right) .
\end{equation*}

et $n_{1}\leq n_{2}<m$. Since $f_{n_{1},n_{2}}\left(
x_{1},x_{2},t_{1},t_{2}\right) $ is constant on$\ \left[ \frac{j_{1}}{2^{m}},%
\frac{j_{1}+1}{2^{m}}\right) \times \left[ \frac{j_{2}}{2^{m}},\frac{j_{2}+1%
}{2^{m}}\right) $ for fixed $\left( x_{1},x_{2}\right) \in \mathbb{I}^{2}$
we conclude that

\begin{equation}
P_{3}\left( n_{1},n_{2}\right) =0.  \label{P4}
\end{equation}

Let $m\leq n_{1}$. Then for $P_{1}$ we can write%
\begin{equation*}
P_{1}\left( n_{1},n_{2}\right)
\end{equation*}%
\begin{equation*}
=\sup\limits_{m\leq n_{1}}\sup\limits_{0\leq j_{1},j_{2}<2^{m}}\left(
2^{2m}\int\limits_{J_{1}\times J_{2}}\left\vert
\sum\limits_{k_{1}=j_{1}2^{n_{1}-m}}^{\left( j_{1}+1\right)
2^{n_{1}-m}-1}\sum\limits_{k_{2}=j_{2}2^{n_{2}-m}}^{\left( j_{2}+1\right)
2^{n_{2}-m}-1}\right. \right.
\end{equation*}%
\begin{equation*}
S_{k_{1},k_{2}}\left( x_{1},x_{2};f\right) \mathbb{I}_{\delta
_{k_{1}}^{n_{1}}}\left( t_{1}\right) \mathbb{I}_{\delta
_{k_{2}}^{n_{2}}}\left( t_{2}\right) -2^{2m}
\end{equation*}%
\begin{equation*}
\times \int\limits_{J_{1}\times
J_{2}}\sum\limits_{k_{1}=j_{1}2^{n_{1}-m}}^{\left( j_{1}+1\right)
2^{n_{1}-m}-1}\sum\limits_{k_{2}=j_{2}2^{n_{2}-m}}^{\left( j_{2}+1\right)
2^{n_{2}-m}-1}S_{k_{1},k_{2}}\left( x_{1},x_{2};f\right)
\end{equation*}%
\begin{equation*}
\left. \left. \times \mathbb{I}_{\delta _{k_{1}}^{n_{1}}}\left( u_{1}\right) 
\mathbb{I}_{\delta _{k_{2}}^{n_{2}}}\left( u_{2}\right)
du_{1}du_{2}\right\vert ^{2}dt_{1}dt_{2}\right) ^{1/2}
\end{equation*}%
\begin{equation*}
=\sup\limits_{m\leq n_{1}}\sup\limits_{0\leq j_{1},j_{2}<2^{m}}\left(
2^{m-n_{1}}2^{m-n_{2}}\sum\limits_{k_{1}=j_{1}2^{n_{1}-m}}^{\left(
j_{1}+1\right) 2^{n_{1}-m}-1}\sum\limits_{k_{2}=j_{2}2^{n_{2}-m}}^{\left(
j_{2}+1\right) 2^{n_{2}-m}-1}\right.
\end{equation*}%
\begin{equation*}
\left\vert S_{k_{1},k_{2}}\left( x_{1},x_{2};f\right)
-2^{m-n_{1}}2^{m-n_{2}}\right.
\end{equation*}%
\begin{equation*}
\left. \left. \times \sum\limits_{k_{1}=j_{1}2^{n_{1}-m}}^{\left(
j_{1}+1\right) 2^{n_{1}-m}-1}\sum\limits_{k_{2}=j_{2}2^{n_{2}-m}}^{\left(
j_{2}+1\right) 2^{n_{2}-m}-1}S_{k_{1},k_{2}}\left( x_{1},x_{2};f\right)
\right\vert ^{2}\right) ^{1/2}
\end{equation*}

\begin{equation*}
=\sup\limits_{m_{1}\leq n_{1},m_{2}\leq n_{2}}\sup\limits_{0\leq
j_{1}<2^{m_{1}},0\leq j_{2}<2^{m_{2}}}\left(
2^{-m_{1}}2^{-m_{2}}\sum\limits_{l_{1}=0}^{2^{m_{1}}-1}\sum%
\limits_{l_{2}=0}^{2^{m_{2}}-1}\right.
\end{equation*}%
\begin{equation*}
\left\vert S_{l_{1}+j_{1}2^{m_{1}},l_{2}+j_{2}2^{m_{2}}}\left(
x_{1},x_{2};f\right) -2^{-m_{1}}2^{-m_{2}}\right.
\end{equation*}%
\begin{equation*}
\left. \left. \times
\sum\limits_{q_{1}=0}^{2^{m_{1}}-1}\sum%
\limits_{q_{2}=0}^{2^{m_{2}}-1}S_{q_{1}+j_{1}2^{m_{1}},q_{2}+j_{2}2^{m_{2}}}%
\left( x_{1},x_{2};f\right) \right\vert ^{2}\right) ^{1/2}.
\end{equation*}

Since%
\begin{eqnarray*}
&&S_{l_{1}+j_{1}2^{m_{1}},l_{2}+j_{2}2^{m_{2}}}\left( x_{1},x_{2};f\right) \\
&=&S_{j_{1}2^{m_{1}},j_{2}2^{m_{2}}}\left( x_{1},x_{2};f\right)
+S_{l_{1},j_{2}2^{m_{2}}}\left( x_{1},x_{2};fw_{j_{1}2^{m_{1}}}\right)
w_{j_{1}2^{m_{1}}}\left( x_{1}\right) \\
&&+S_{j_{1}2^{m_{1}},l_{2}}\left( x_{1},x_{2};fw_{j_{2}2^{m_{2}}}\right)
w_{j_{2}2^{m_{2}}}\left( x_{2}\right) \\
&&+S_{l_{1},l_{2}}\left( x_{1},x_{2};fw_{j_{1}2^{m_{1}}}\otimes
w_{j_{2}2^{m_{2}}}\right) w_{j_{1}2^{m_{1}}}\left( x_{1}\right)
w_{j_{2}2^{m_{2}}}\left( x_{2}\right)
\end{eqnarray*}%
we can write%
\begin{equation}
P_{1}\left( n_{1},n_{2}\right) \leq \sup\limits_{m_{1}\leq n_{1},m_{2}\leq
n_{2}}\sup\limits_{0\leq j_{1}<2^{m_{1}},0\leq j_{2}<2^{m_{2}}}\left(
2^{-m_{1}-m_{2}}\sum\limits_{l_{1}=0}^{2^{m_{1}}-1}\sum%
\limits_{l_{2}=0}^{2^{m_{2}}-1}\right.  \label{P11-P13}
\end{equation}%
\begin{equation*}
S_{l_{1},l_{2}}\left( x_{1},x_{2};fw_{j_{1}2^{m_{1}}}\otimes
w_{j_{2}2^{m_{2}}}\right) -2^{-m_{1}-m_{2}}
\end{equation*}%
\begin{equation*}
-\left. \left. \times
\sum\limits_{q_{1}=0}^{2^{m_{1}}-1}\sum%
\limits_{q_{2}=0}^{2^{m_{2}}-1}S_{q_{1},q_{2}}\left(
x_{1},x_{2};fw_{j_{1}2^{m_{1}}}\otimes w_{j_{2}2^{m_{2}}}\right) \right\vert
^{2}\right) ^{1/2}
\end{equation*}%
\begin{equation*}
+\sup\limits_{m_{1}\leq n_{1},m_{2}\leq n_{2}}\sup\limits_{0\leq
j_{1}<2^{m_{1}},0\leq j_{2}<2^{m_{2}}}\left(
2^{-m_{1}}\sum\limits_{l_{1}=0}^{2^{m_{1}}-1}\left\vert
S_{l_{1},j_{2}2^{m_{2}}}\left( x_{1},x_{2};fw_{j_{1}2^{m_{1}}}\right)
\right. \right.
\end{equation*}%
\begin{equation*}
-\left. \left.
2^{-m_{1}}\sum\limits_{q_{1}=0}^{2^{m_{1}}-1}S_{q_{1},j_{2}2^{m_{2}}}\left(
x_{1},x_{2};fw_{j_{1}2^{m_{1}}}\right) \right\vert ^{2}\right) ^{1/2}
\end{equation*}%
\begin{equation*}
+\sup\limits_{m_{1}\leq n_{1},m_{2}\leq n_{2}}\sup\limits_{0\leq
j_{1}<2^{m_{1}},0\leq j_{2}<2^{m_{2}}}\left(
2^{-m_{2}}\sum\limits_{l_{2}=0}^{2^{m_{2}}-1}\left\vert
S_{j_{1}2^{m_{1}},l_{2}}\left( x_{1},x_{2};fw_{j_{2}2^{m_{2}}}\right)
\right. \right.
\end{equation*}%
\begin{equation*}
-\left. \left.
2^{-m_{1}}\sum\limits_{q_{2}=0}^{2^{m_{2}}-1}S_{j_{1}2^{m_{1}},q_{2}}\left(
x_{1},x_{2};fw_{j_{2}2^{m_{2}}}\right) \right\vert ^{2}\right) ^{1/2}
\end{equation*}%
\begin{equation*}
:=P_{11}\left( n_{1},n_{2}\right) +P_{12}\left( n_{1},n_{2}\right)
+P_{13}\left( n_{1},n_{2}\right) .
\end{equation*}

From Lemmas \ref{2-estimation} and \ref{iteration} we obtain%
\begin{equation*}
P_{11}\left( n_{1},n_{2}\right) \lesssim \sup\limits_{m_{1}\leq
n_{1},m_{2}\leq n_{2}}\sup\limits_{0\leq j_{1}<2^{m_{1}},0\leq
j_{2}<2^{m_{2}}}V_{m_{1},m_{2}}\left( x_{1},x_{2};\left\vert
fw_{j_{1}2^{m_{1}}}\otimes w_{j_{2}2^{m_{2}}}\right\vert \right) 
\end{equation*}%
\begin{equation*}
\lesssim V^{\left( 1\right) }\left( x_{1},x_{2};V^{\left( 2\right) }\left(
\left\vert f\right\vert \right) \right) .
\end{equation*}%
Consequently, by Theorem Sch 1 we can write%
\begin{equation}
\left\vert \left\{ \left( x_{1},x_{2}\right) \in \mathbb{I}%
^{2}:\sup\limits_{n_{1},n_{2}}P_{11}\left( n_{1},n_{2}\right) >\lambda
\right\} \right\vert   \label{P11}
\end{equation}%
\begin{equation*}
\lesssim \frac{1}{\lambda }\int\limits_{\mathbb{I}}\left( \int\limits_{%
\mathbb{I}}V^{\left( 2\right) }\left( x_{1},x_{2};\left\vert f\right\vert
\right) dx_{1}\right) dx_{2}
\end{equation*}%
\begin{equation*}
\lesssim \frac{1}{\lambda }\int\limits_{\mathbb{I}}\left( \int\limits_{%
\mathbb{I}}\left\vert f\left( x_{1},x_{2}\right) \right\vert \log
^{+}\left\vert f\left( x_{1},x_{2}\right) \right\vert dx_{2}+1\right) dx_{1}
\end{equation*}%
\begin{equation*}
\lesssim \frac{1}{\lambda }\left( \int\limits_{\mathbb{I}^{2}}\left\vert
f\right\vert \log ^{+}\left\vert f\right\vert +1\right) .
\end{equation*}

Since%
\begin{equation*}
S_{l_{1},j_{2}2^{m}}\left( x_{1},x_{2};fw_{j_{1}2^{m}}\right)
=S_{l_{1}}^{\left( 1\right) }\left( x_{1},x_{2};S_{j_{2}2^{m}}\left(
f\right) w_{j_{1}2^{m}}\right) 
\end{equation*}%
from Lemma \ref{SchD2} we have%
\begin{equation*}
P_{12}\left( n_{1},n_{2}\right) 
\end{equation*}%
\begin{equation*}
\lesssim \sup\limits_{m_{1}\leq n_{1},m_{2}\leq n_{2}}\sup\limits_{0\leq
j_{1}<2^{m_{1}},0\leq j_{2}<2^{m_{2}}}\left(
2^{-m_{1}}\sum\limits_{l_{1}=0}^{2^{m_{1}}-1}\left\vert S_{l_{1}}^{\left(
1\right) }\left( x_{1},x_{2};S_{j_{2}2^{m_{2}}}\left( f\right)
w_{j_{1}2^{m_{1}}}\right) \right\vert ^{2}\right) ^{1/2}
\end{equation*}%
\begin{equation*}
\leq \sup\limits_{m_{1}\leq n_{1},m_{2}\leq n_{2}}\sup\limits_{0\leq
j_{1}<2^{m_{1}},0\leq j_{2}<2^{m_{2}}}V^{\left( 1\right) }\left(
x_{1},x_{2};\left\vert S_{j_{2}2^{m_{2}}}^{\left( 2\right) }\left( f\right)
w_{j_{1}2^{m_{1}}}\right\vert \right) 
\end{equation*}%
\begin{equation*}
\leq V^{\left( 1\right) }\left( x_{1},x_{2};S_{\ast }^{\left( 2\right)
}\left( f\right) \right) ,
\end{equation*}%
where%
\begin{equation*}
S_{\ast }^{\left( 2\right) }\left( f\right) :=\sup\limits_{n}\left\vert
S_{n}^{\left( 2\right) }\left( f\right) \right\vert .
\end{equation*}

If $f\in L\left( \log ^{+}L\right) ^{2}\left( \mathbb{I}^{2}\right) $. Then $%
f\left( x_{1},\cdot \right) \in L\left( \log ^{+}L\right) ^{2}\left( \mathbb{%
I}\right) $ for a. e. $x_{1}\in \mathbb{I}$, and from the well-known theorem
(see \cite{tateoka}) $S_{\ast }^{\left( 2\right) }\left( x_{1},\cdot
,f\right) \in L_{1}\left( \mathbb{I}\right) $ for a. e. $x_{1}\in \mathbb{I}$%
. Moreover%
\begin{equation*}
\int\limits_{\mathbb{I}}S_{\ast }^{\left( 2\right) }\left(
x_{1},x_{2};f\right) dx_{2}\lesssim \int\limits_{\mathbb{I}}\left\vert
f\left( x_{1},x_{2}\right) \right\vert \left( \log ^{+}\left\vert f\left(
x_{1},x_{2}\right) \right\vert \right) ^{2}dx_{2}+1
\end{equation*}%
for a. e. $x_{1}\in \mathbb{I}$.

Hence,%
\begin{equation}
\left\vert \left\{ \left( x_{1},x_{2}\right) \in \mathbb{I}%
^{2}:\sup\limits_{n_{1},n_{2}}P_{12}\left( n_{1},n_{2}\right) >\lambda
\right\} \right\vert  \label{P12}
\end{equation}%
\begin{equation*}
\lesssim \left\vert \left\{ \left( x_{1},x_{2}\right) \in \mathbb{I}%
^{2}:V^{\left( 1\right) }\left( x_{1},x_{2};S_{\ast }^{\left( 2\right)
}\left( f\right) \right) >\lambda \right\} \right\vert
\end{equation*}%
\begin{equation*}
\lesssim \frac{1}{\lambda }\int\limits_{\mathbb{I}}\left( \int\limits_{%
\mathbb{I}}S_{\ast }^{\left( 2\right) }\left( x_{1},x_{2};f\right)
dx_{1}\right) dx_{2}
\end{equation*}%
\begin{equation*}
\lesssim \frac{1}{\lambda }\int\limits_{\mathbb{I}}\left( \int\limits_{%
\mathbb{I}}\left\vert f\left( x_{1},x_{2}\right) \right\vert \left( \log
^{+}\left\vert f\left( x_{1},x_{2}\right) \right\vert \right)
^{2}dx_{2}+1\right) dx_{1}
\end{equation*}%
\begin{equation*}
\lesssim \frac{1}{\lambda }\left( \int\limits_{\mathbb{I}^{2}}\left\vert
f\right\vert \left( \log ^{+}\left\vert f\right\vert \right) ^{2}+1\right) .
\end{equation*}

Analogously, we can prove that%
\begin{equation}
\left\vert \left\{ \left( x_{1},x_{2}\right) \in \mathbb{I}%
^{2}:\sup\limits_{n_{1},n_{2}}P_{13}\left( n_{1},n_{2}\right) >\lambda
\right\} \right\vert   \label{P13}
\end{equation}%
\begin{equation*}
\lesssim \frac{1}{\lambda }\left( \int\limits_{\mathbb{I}^{2}}\left\vert
f\right\vert \left( \log ^{+}\left\vert f\right\vert \right) ^{2}+1\right) .
\end{equation*}

Combining (\ref{P11-P13})- (\ref{P13}) we get%
\begin{equation}
\left\vert \left\{ \left( x_{1},x_{2}\right) \in \mathbb{I}%
^{2}:\sup\limits_{n_{1},n_{2}}P_{1}\left( n_{1},n_{2}\right) >\lambda
\right\} \right\vert  \label{P1}
\end{equation}%
\begin{equation*}
\lesssim \frac{1}{\lambda }\left( \int\limits_{\mathbb{I}^{2}}\left\vert
f\right\vert \left( \log ^{+}\left\vert f\right\vert \right) ^{2}+1\right) .
\end{equation*}

Analogously, we can prove that%
\begin{equation}
\left\vert \left\{ \left( x_{1},x_{2}\right) \in \mathbb{I}%
^{2}:\sup\limits_{n_{1},n_{2}}P_{2}\left( n_{1},n_{2}\right) >\lambda
\right\} \right\vert  \label{P2-P3}
\end{equation}%
\begin{equation*}
\lesssim \frac{1}{\lambda }\left( \int\limits_{\mathbb{I}^{2}}\left\vert
f\right\vert \left( \log ^{+}\left\vert f\right\vert \right) ^{2}+1\right) .
\end{equation*}

Combining (\ref{P1-P4}), (\ref{P4}), (\ref{P1}) and (\ref{P2-P3}) we
complete the proof of Theorem \ref{bmo}.
\end{proof}

\end{document}